\newtheorem{theorem}{Theorem}[section]
\newtheorem{corollary}[theorem]{Corollary}
\newtheorem{prop}[theorem]{Proposition}
\def\A{\operatorname{A}}
\def\D{\operatorname{D}}
\def\E{\operatorname{E}}
\def\F{\operatorname{F}}
\def\U{\operatorname{U}}
\def\SU{\operatorname{SU}}
\def\Sp{\operatorname{Sp}}
\def\Ad{\operatorname{Ad}}
\def\ad{\operatorname{ad}}
\def\Aut{\operatorname{Aut}}
\def\dim{\operatorname{dim}}
\def\exp{\operatorname{exp}}
\def\Exp{\operatorname{Exp}}
\def\Int{\operatorname{Int}}
\def\Lie{\operatorname{Lie}}
\def\Out{\operatorname{Out}}
\def\Pin{\operatorname{Pin}}
\def\span{\operatorname{span}}
\def\Spin{\operatorname{Spin}}
\def\Sym{\operatorname{Sym}}
\def\tr{\operatorname{tr}}
\newcommand{\fre}{\mathfrak{e}}
\newcommand{\frf}{\mathfrak{f}}
\newcommand{\frg}{\mathfrak{g}}
\newcommand{\frh}{\mathfrak{h}}
\newcommand{\frk}{\mathfrak{k}}
\newcommand{\frl}{\mathfrak{l}}
\newcommand{\frp}{\mathfrak{p}}
\newcommand{\frs}{\mathfrak{s}}
\newcommand{\frt}{\mathfrak{t}}
\newcommand{\fru}{\mathfrak{u}}
\newcommand{\frz}{\mathfrak{z}}
\newcommand{\bbC}{\mathbb{C}}
\newcommand{\bbR}{\mathbb{R}}
\newcommand{\bbZ}{\mathbb{Z}}
\newcommand{\bbH}{\mathbb{H}}
\begin{document}

\title[Klein Four subgroups of Lie Algebra Automorphisms]{Klein Four subgroups of Lie Algebra Automorphisms}

\date{February 2011}

\author{Jing-Song Huang}
\address[Huang]{Department of Mathematics, Hong Kong University of Science and Technology,
Clear Water Bay, Kowloon, Hong Kong SAR, China}
\email{mahuang@ust.hk}

\author{Jun Yu}
\address[Yu]{Department of Mathematics, ETH Z\"urich, 8092 Z\"urich,
Switzerland} \email{jun.yu@math.ethz.ch}

%\thanks{The research work described in this paper is partially supported by
%Research Grant from Research Grant Council of HKSAR, China}

\abstract{By calculating the symmetric subgroups $\Aut(\fru_0)^{\theta}$ and their involution classes,  
we classify the Klein four subgroups $\Gamma$ of $\Aut(\fru_0)$ for each compact
simple Lie algebra $\fru_0$ up to conjugation. This leads to a new approach of classification
of semisimple symmetric pairs and $\bbZ_2\times \bbZ_2$-symmetric spaces. We also
determine the fixed point subgroup $\Aut(\fru_0)^\Gamma$.}

\endabstract

\subjclass[2000]{17B40, 22E15}

\keywords{Automorphism group, involution, symmetric subgroup, Klein four group, involution type.}

\maketitle

%%%%%%%%%%%%%%%%%%%%%%%%%%%%%%%%%%%%%%%%%%%%%%%%%%%%%%%%%%%%%%%%%%%%%%%%%%%%%%
\section{Introduction}
%%%%%%%%%%%%%%%%%%%%%%%%%%%%%%%%%%%%%%%%%%%%%%%%%%%%%%%%%%%%%%%%%%%%%%%%%%%%%%%%%

The Riemannian symmetric pairs were classified by Cartan [C] and the more general semisimple
symmetric pairs by Berger [B]. The algebraic structure of semisimple
symmetric spaces is even interesting for geometric and analytic reasons. Some of the
recent work are Oshima and Sekiguchi's classification of the reduced
root systems \cite{Oshima-Sekiguchi} and Helminck's classification
for algebraic groups \cite{Helminck}.  Mostly recently some new approach
of classification and parametrization of semisimple symmetric pairs
were given in \cite{Huang} by using admissible quadruplets and in
\cite{Chuah-Huang} by using double Vogan diagrams.

In this paper we study the structure of semisimple symmetric space by
determining the Klein four subgroups in Lie algebra automorphisms.
Let $\fru_0$ be a simple compact Lie algebra and $\frg$ be its complexification.
Denote by $\Aut(\fru_0)$ the automorphism group of $\fru_0$. For any involution
$\theta$ in $\Aut(\fru_0)$, we first determine the centralizer $\Aut(\fru_0)^{\theta}$ of $\theta$,
which is a symmetric subgroup. By understanding the conjugacy classes of involutions in 
$\Aut(\fru_0)^{\theta}$, we proceed to classify Klein four subgroups in $\Aut(\fru_0)$ up to conjugation.
This gives a new approach for classification of commuting pairs of involutions of $\fru_0$
or $\frg$.  We note that the ordered commuting pairs of involutions corresponding
to Berger's classification of semisimple symmetric spaces.

If $\Gamma$ is a finite abelian subgroup of the automorphism group
of a Lie group $G$,  then the homogeneous space $G/H$ is called $\Gamma$-symmetric
provided $G^\Gamma_0\subseteq H\subseteq G^\Gamma$ (cf. [L]).  In the case $\Gamma=\bbZ_2$,
it is the symmetric space and in the case $\Gamma=\bbZ_k$ it is $k$-symmetric
space studied in \cite{Wolf-Gray}.  In the case $\Gamma=\bbZ_2\times \bbZ_2$ is the
Klein four group,  the exceptional $\bbZ_2\times \bbZ_2$-symmetric
spaces were studied in \cite{Kollross}.  This paper contains a complete list of all
$\bbZ_2\times \bbZ_2$-symmetric pairs.  Finally, we determine the fixed point subgroup
$\Aut(\fru_0)^\Gamma$.

%%%%%%%%%%%%%%%%%%%%%%%%%%%%%%%%%%%%%%%%%%%%%%%%%%%%%%%%%%%%%%%%%%%%%%%%%%%%%%
\section{Preliminaries and notations}\label{Notations and Preliminary}
%%%%%%%%%%%%%%%%%%%%%%%%%%%%%%%%%%%%%%%%%%%%%%%%%%%%%%%%%%%%%%%%%%%%%%%%%%%%%%%%

\subsection{Complex semi-simple Lie algebras} \label{complex
semi-simple algebra}

Let $\frg$ be a complex semisimple Lie algebra  and $\frh$
a Cartan subalgebra. Then  it has a root-space
decomposition
\[\frg=\frh \oplus(\bigoplus _{\alpha \in \Delta} \frg_{\alpha}),\]
where $\Delta=\Delta(\frg,\frh)$ is the root system of $\frg$ and
$\frg_{\alpha}$ is the root space of the root $\alpha\in\Delta$.
Let $B$ be the Killing form on $\frg$.
It is a non-degenerate symmetric form.  The restriction of $B$ to $\frh$ is also
non-degenerate. For any $\lambda \in \frh^{\ast}$, let
$H_{\lambda}\in \frh$ be determined by
\[B(H_{\lambda}, H)=\lambda(H), \forall H \in \frh.\] For any
$\lambda, \mu \in \frh^{\ast}$, define $\langle \lambda,
\mu\rangle:=B(H_{\lambda},H_{\mu})$. Note that, $\forall \alpha,\beta \in \Delta$,
\begin{eqnarray*}&&\langle
\alpha,\beta\rangle=B(H_{\alpha},H_{\beta})=\beta(H_{\alpha})=\alpha(H_{\beta})\in
\bbR,\\&& \langle\alpha,\alpha\rangle=B( H_{\alpha},H_{\alpha})=\alpha
(H_{\alpha}) \not= 0,\end{eqnarray*} and $2\langle\alpha,
\beta\rangle/\langle\beta, \beta\rangle \in \bbZ$. Let
\[H'_{\alpha}=\frac{2}{\alpha(H_{\alpha})}H_{\alpha}, A_{\alpha,
\beta}=2\langle\alpha, \beta\rangle/\langle\beta,
\beta\rangle=\alpha(H'_{\beta}).\] \
Then $[H'_{\alpha},
X_{\beta}]=\beta(H_{\alpha})X_{\beta}=2\langle\alpha,
\beta\rangle/\langle\alpha, \alpha\rangle=A_{\beta, \alpha}
X_{\beta}.$  The $\{H'_{\alpha}\}$ correspond to the co-roots
$\{\alpha^\vee=\frac{2\alpha}{\langle\alpha,\alpha\rangle}|\alpha\in\Delta\}$.

Choose a lexicography order to get a positive system $\Delta^{+}$
and a simple system $\Pi$. Drawing $A_{\alpha,\beta} A_{\beta,\alpha}$
edges to connect any two distinct simple roots $\alpha$ and $\beta$
defines the Dynkin diagram. We follow Bourbaki numbering to order the simple roots.
For simplicity,  we write $H_{i},H'_{i}$ for $H_{\alpha_{i}},H'_{\alpha_{i}}$ for a simple root $\alpha_{i}$.

Let $\Aut(\frg)$ be the group of all complex linear automorphisms
of $\frg$ and $\Int(\frg)$ the subgroup of inner automorphisms. We define
\[\Out(\frg):=\Aut(\frg)/\Int(\frg)\cong \Aut(\Pi),\]
and the exponential map $\exp: \frg\longrightarrow \Aut(\frg)$ by
\[\exp(X)=\exp(\ad(X)), \forall X\in \frg=\Lie(\Aut(\frg)).\]

\subsection{A compact real form} \label{a specific compact form}

One can normalize the root vectors $\{X_{\alpha}, X_{-\alpha}\}$ so that $\langle
X_{\alpha}, X_{-\alpha}\rangle=2/\alpha(H_{\alpha})$ . Then
$[X_{\alpha}, X_{-\alpha}]=H'_\alpha$. Moreover, one can normalize
$\{X_{\alpha}\}$ appropriately, such that
\[\fru_0=\span\{X_{\alpha}-X_{-\alpha}, i(X_{\alpha}+X_{-\alpha}), i H_{\alpha}:
\alpha \in \Delta\}\] is a compact real form of $\frg$ (\cite{Knapp}
pages $348-354$). Define \[\theta(X+iY):=X-iY ,\forall X,Y\in
\fru_0.\] Then $\theta$ is a Cartan involution of $\frg$ (as a real
Lie algebra) and $\fru_0=\frg^{\theta}$ is a maximal compact
subalgebra of $\frg$.  All real form of $\frg$ are conjugate by
$\Aut(\frg)$.

Let $\Aut(\fru_0)$ be the group of automorphisms of $\fru_0$
and $\Int(\fru_0)$ the subgroup of  inner automorphisms.
We define \[\Theta(f):=\theta f \theta^{-1}, \forall f\in \Aut(\frg).\]
Then it is a Cartan involution of $\Aut(\frg)$ with differential
$\theta$.  It follows that $\Aut(\fru_0)=\Aut(\frg)^{\Theta}$
and $\Int(\fru_0)=\Int(\frg)^{\Theta}$
are maximal compact subgroups of $\Aut(\frg)$ and $\Int(\frg)$ respectively.
We also set
\[\Out(\fru_0):=\Aut(\fru_0)/\Int(\fru_0)\cong \Out(\frg)\cong\Aut(\Pi).\]

\subsection{Notations and involutions}\label{involution representative}

We denote by $\fre_6$ the compact simple Lie algebra of type $\bf E_6$.  Let $\E_6$ be 
the connected and simply connected Lie group with Lie algebra $\fre_6$. Naturally, 
$\fre_6(\bbC)$ and $\E_6(\bbC)$ are their complexifications. Similar
notations will be used for other types.

Let $Z(G)$ denote the center of a group $G$ and $G_0$ denote the connected component of $G$ 
containing identity element. For $H\subset G$ ($\frh\subset\frg$),
let $Z_{G}(H)$ ($Z_{\frg}(\frh)$) denote the centralizer of $H$ in $G$ ($\frh$ in $\frg$), 
let $N_{G}(H)$ denote the normalizer of $H$ in $G$.

In the case $G=\E_6,\E_7$, let $c$ denotes a non-trivial element in $Z(G)$.

In the case $\fru_0=\fre_7$, let $H'_0=\frac{H'_2+H'_5+H'_7}{2}\in\fre_7$.

\smallskip

Let $\Pin(n)$ ($\Spin(n)$) be the Pin (Spin) group in degree $n$.
Write $c=e_1e_2\cdots e_{n}\in \Pin(n)$. Then $c$ is in $\Spin(n)$ if and only if $n$ is even.
In this case $c\in Z(\Spin(n))$.  If $n$ is odd, then $\Spin(n)$ has a Spinor module $M$ of
dimension $2^{\frac{n-1}{2}}$.  If $n$ is even, then $\Spin(n)$ has two Spinor
modules $M_{+},M_{-}$  of dimension $2^{\frac{n-2}{2}}$. We distinguish
$M_{+},M_{-}$ by requiring that  $c$ acts on $M_{+}$  and $M_{-}$ by scalar 1 or -1
respectively when $4|n$; and by $-i$ or $i$ respectively when $4|n-2$.

\smallskip

We need the following notations for the rest of the paper.

We first define the following matrices, 
  \[ J_{m}=\left(
\begin{array}{cc} 0&I_{m}\\ -I_{m}&0\\ \end{array} \right),
I_{p,q}=\left(\begin{array}{cc} -I_{p}&0\\ 0&I_{q}\\ \end{array}
\right),\] \[I'_{p,q}=\left( \begin{array}{cccc} -I_{p}&0&0&0\\
0&I_{q}&0&0\\ 0&0&-I_{p}&0\\ 0&0&0&I_{q}\\\end{array}  \right),\  
J_{p,q}=\left( \begin{array}{cccc} 0&I_{p}&0&0\\-I_{p}&0&0&0\\ 0&0&0&I_{q}\\ 0&0&-I_{q}&0\\\end{array}  \right),\]
\[K_{p}=\left( \begin{array}{cccc} 0&0&0&I_{p}\\0&0&-I_{p}&0\\ 0&I_{q}&0&0\\ -I_{q}&0&0&0\\\end{array}\right).\]

Then we define the following groups,
\[ Z_{m}=\{\lambda I_{m}|\lambda^{m}=1\},\] 
\[ Z'=\{(\epsilon_1,\epsilon_2,\epsilon_3,\epsilon_4)|
\epsilon_{i}=\pm{1}, \epsilon_1\epsilon_2\epsilon_3\epsilon_4=1\},\]
\[ \Gamma_{p,q,r,s}=\langle \left(
\begin{array} {cccc} -I_{p}&0&0&0 \\ 0& -I_{q}&0&0 \\ 0&0&I_{r}&0
\\ 0&0&0&I_{s}\\  \end{array}  \right), \left( \begin{array}{cccc}-I_{p}&0&0&0
\\0&I_{q}&0&0 \\ 0&0&-I_{r}&0 \\ 0&0&0&I_{s} \\  \end{array}
\right) \rangle.\]

\smallskip

We note that the conjugacy classes of involutions in the automorphism groups of
compact simple Lie algebras are in one-one correspondence with isomorphism classes of
real forms of complex simple Lie algebras, and in one-one correspondence with irreducible
Riemannian symmetric pairs $(\fru_0,\frh_0)$ (with $\fru_0$ compact simple) or 
$(\frg_0,\frh_0)$ (with $\frg_0$ non-compact simple). These are classified by
\'Elie Cartan in 1926. We list this classification here (cf.
\cite{Knapp}).

\smallskip
The classical compact simple Lie algebras are defined as follows.
For $F=\bbR,\bbC,\bbH$, let $M_{n}(F)$ be the set of $n\times n$
matrices with entries in $F$, and
\begin{eqnarray*}
&&\mathfrak{so}(n)=\{X\in M_{n}(\bbR)|X+X^{t}=0\},\\&&
\mathfrak{su}(n)=\{X\in M_{n}(\bbC)|X+X^{\ast}=0, \tr X=0\}, \\&&
\mathfrak{sp}(n)=\{X\in M_{n}(\bbH)|X+X^{\ast}=0\}.
\end{eqnarray*} Then
$\{\mathfrak{su}(n): n\geq 3\}$, $\{\mathfrak{so}(2n+1): n\geq 1\}$,
$\{\mathfrak{sp}(n): n\geq 3\}$, $\{\mathfrak{so}(2n): n\geq 4\}$
represent all compact classical simple Lie algebras.
\smallskip

We define the following involutions which are representatives for
all conjugacy classes of involutions.

i) Type $\bf A$. For $\fru_0=\mathfrak{su}(n),n\geq 3$, $\{\Ad(I_{p,n-p})| 1\leq p\leq \frac{n}{2}\}$
(type {\bf AIII}), $\{\tau\}$ (type {\bf AI}), $\{\tau\circ\Ad(J_{\frac{n}{2}})\}\}$ (type {\bf AII})
represent all conjugacy classes of involutions in $\Aut(\fru_0)$,
the corresponding real forms are
$\mathfrak{su}(p,n-p),\mathfrak{sl}(n,\bbR),\mathfrak{sl}(\frac{n}{2},\bbH)$.

ii) Type $\bf B$. For $\fru_0=\mathfrak{so}(2n+1),n\geq 1$, $\{\Ad(I_{p,2n+1-p})|
1\leq p\leq n\}$ (type {\bf BI}) represent all conjugacy classes of involutions in
$\Aut(\fru_0)$, and the corresponding real forms are
$\mathfrak{so}(p,2n+1-p)$.

iii) Type $\bf C$. For $\fru_0=\mathfrak{sp}(n),n\geq 3$, $\{\Ad(I_{p,n-p})| 1\leq
p\leq \frac{n}{2}\}$ (type {\bf CII}) and $\{\Ad(\textbf{i}I)\}$ (type {\bf CI}) represent all
conjugacy classes of involutions in $\Aut(\fru_0)$, and the
corresponding real forms are
$\mathfrak{sp}(p,n-p),\mathfrak{sp}(n,\bbR)$.

iv) Type $\bf D$. For $\fru_0=\mathfrak{so}(2n),n\geq 4$, $\{\Ad(I_{p,2n-p})| 1\leq
p\leq n\}$ (type {\bf DI}) and $\{\Ad(J_{n})\}$ (type {\bf DIII}) represent all conjugacy classes of
involutions in $\Aut(\fru_0)$, and the corresponding real forms are
$\mathfrak{so}(p,2n-p),\mathfrak{so^{\ast}}(2n,\bbR)$.

\smallskip

v) Type $\bf E_6$. For $\fru_0=\fre_6$, let $\tau$ be a specific diagram involution
defined by
\begin{eqnarray*} &&\tau(H_{\alpha_1})=H_{\alpha_6},
\tau(H_{\alpha_6})=H_{\alpha_1}, \tau(H_{\alpha_3})=H_{\alpha_5},
\tau(H_{\alpha_5})=H_{\alpha_3},
\\&& \tau(H_{\alpha_2})=H_{\alpha_2},
\tau(H_{\alpha_4})=H_{\alpha_4},
\tau(X_{\pm{\alpha_1}})=X_{\pm{\alpha_6}},
\tau(X_{\pm{\alpha_6}})=X_{\pm{\alpha_1}},\\&&
\tau(X_{\pm{\alpha_3}})=X_{\pm{\alpha_5}},
\tau(X_{\pm{\alpha_5}})=X_{\pm{\alpha_3}},
\tau(X_{\pm{\alpha_2}})=X_{\pm{\alpha_2}},
\tau(X_{\pm{\alpha_4}})=X_{\pm{\alpha_4}}.
\end{eqnarray*} Let \[\sigma_1=\exp(\pi i H'_2), \sigma_2=\exp(\pi i
(H'_1+H'_6)), \sigma_3=\tau, \sigma_4=\tau \exp(\pi i H'_2).\] Then
$\sigma_1,\sigma_2,\sigma_3,\sigma_4$ represent all conjugacy
classes of involutions in $\Aut(\fru_0)$, which correspond to
Riemannian symmetric spaces of type {\bf EII, EIII, EIV, EI} and
the corresponding real forms are
$\fre_{6,-2},\fre_{6,14},\fre_{6,26},\fre_{6,-6}$.
$\sigma_1,\sigma_2$ are inner automorphisms, $\sigma_3,\sigma_4$
are outer automorphisms.

\smallskip

vi) Type $\bf E_7$. For $\fru_0=\fre_7$, let \[\sigma_1=\exp(\pi i H'_2),
\sigma_2=\exp(\pi i \frac{H'_2+H'_5+H'_7}{2}), \sigma_3=\exp(\pi i
\frac{H'_2+H'_5+H'_7+2H'_1}{2}).\] Then $\sigma_1,\sigma_2,\sigma_3$
represent all conjugacy classes of involutions in $\Aut(\fru_0)$,
which correspond to Riemannian symmetric spaces of type {\bf EVI, EVII, EV}
and the corresponding real forms are
$\fre_{7,3},\fre_{7,25},\fre_{7,-7}$.

\smallskip

vii) Type $\bf E_8$. For $\fru_0=\fre_8$, let \[\sigma_1=\exp(\pi i H'_2),
\sigma_2=\exp(\pi i (H'_2+H'_1)).\] Then $\sigma_1,\sigma_2$
represent all conjugacy classes of involutions in $\Aut(\fru_0)$,
which correspond to Riemannian symmetric spaces of type {\bf EIX, EVIII}
and the corresponding real forms are $\fre_{8,24},\fre_{8,-8}$.

\smallskip

viii) Type $\bf F_4$. For $\fru_0=\frf_4$, let \[\sigma_1=\exp(\pi i H'_1),
\sigma_2=\exp(\pi i H'_4).\] Then $\sigma_1,\sigma_2$ represent all
conjugacy classes of involutions in $\Aut(\fru_0)$, which correspond
to Riemannian symmetric spaces of type {\bf FI, FII} and the
corresponding real forms are $\frf_{4,-4},\frf_{4,20}$.

\smallskip

ix) Type $\bf G_2$. For $\fru_0=\frg_2$, let $\sigma=\exp(\pi H'_1)$, which represents
the unique conjugacy class of involutions in $\Aut(\fru_0)$,
corresponds to Riemannian symmetric space of type {\bf G} and the
corresponding real form is $\frg_{2,-2}$.

%%%%%%%%%%%%%%%%%%%%%%%%%%%%%%%%%%%%%%%%%%%%%%%%%%%%%%%%%%%%%%%%%%%%%%%%%%%%
\section{Centralizer of an automorphism}\label{FPS}
%%%%%%%%%%%%%%%%%%%%%%%%%%%%%%%%%%%%%%%%%%%%%%%%%%%%%%%%%%%%%%%%%%%%%%%%%%%%

In this section we prove a property of the centralizer $G^{z}$ of
an element $z$ in a complex or compact Lie group $G$.
First, we recall a theorem due to Steinberg (\cite{Carter}, Page 93-95).

\begin{prop}(\textbf{Steinberg}) \label{Steinberg}
Let $G$ be  a connected and simply connected
semi-simple complex (or compact) Lie group.   Then the centralizer
$G^{z}$ for $z \in G$ is connected.
\end{prop}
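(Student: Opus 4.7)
The plan is to follow the standard strategy for Steinberg's connectedness theorem: reduce to a semisimple element lying in a maximal torus, describe the identity component of its centralizer via the root system, and handle the potentially ``extra'' components by showing that no extra Weyl-group elements appear when $G$ is simply connected.

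First I would pick a maximal torus $T\subseteq G$ containing $z$; in the complex case this uses Jordan decomposition together with the (separately known) connectedness of centralizers of unipotent elements in simply connected semisimple groups, while in the compact case every element already lies in some maximal torus. Since $T\subseteq G^z$, a standard root-space calculation identifies
\[ \Lie(G^z)=\frt\oplus\bigoplus_{\alpha(z)=1}\frg_\alpha, \]
so that $G^z_0$ is generated by $T$ together with the root subgroups $\exp(\frg_{\pm\alpha})$ for those roots $\alpha$ with $\alpha(z)=1$. In particular, for each such $\alpha$, the reflection $s_\alpha\in W=N_G(T)/T$ lifts to $G^z_0$ through the corresponding copy of $\SU(2)$.

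Next, given any $g\in G^z$, both $T$ and $gTg^{-1}$ are maximal tori of $G^z$, and being connected they both lie in $G^z_0$. By conjugacy of maximal tori in the connected reductive group $G^z_0$, there exists $h\in G^z_0$ with $hg\in N_G(T)\cap G^z$, so its class in $W$ lies in the stabilizer $W^z$. The problem therefore reduces to showing that $W^z$ is generated by the reflections $\{s_\alpha:\alpha(z)=1\}$, since each such reflection has already been lifted into $G^z_0$.

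The main obstacle, and the only place where simple connectedness is actually used, is exactly this last assertion about $W^z$. To establish it, I would write $T=\frt/\Lambda$ for the kernel lattice $\Lambda\subseteq\frt$, lift $z=\exp(X)$, and observe that $w\in W^z$ iff $wX-X\in\Lambda$. For simply connected $G$ one has $\Lambda=Q^\vee$, the coroot lattice, so the condition $wX-X\in Q^\vee$ means precisely that the affine Weyl element $(w,X-wX)\in W\ltimes Q^\vee$ fixes $X$. A classical theorem invoked by Steinberg says that the isotropy group of any point of $\frt$ in the affine Weyl group is generated by the affine reflections fixing that point; projecting back down to $W$ yields precisely the reflections $s_\alpha$ with $\alpha(z)=1$. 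Combined with the earlier reductions this gives $g\in G^z_0$, proving the connectedness of $G^z$. For non-simply connected $G$ the lattice $\Lambda$ strictly contains $Q^\vee$, which is exactly what allows the additional components of $G^z$ that appear in general.
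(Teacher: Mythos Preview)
Your sketch is a correct outline of the standard proof of Steinberg's connectedness theorem: reduce to a semisimple element in a maximal torus, identify $G^{z}_0$ via the root subsystem $\{\alpha:\alpha(z)=1\}$, reduce the component question to showing that the Weyl-group stabilizer $W^{z}$ is generated by the reflections in that subsystem, and finally establish this last point using the affine Weyl group (which is where simple connectedness, via $\Lambda=Q^{\vee}$, enters). Each step is sound as stated.

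However, note that the paper does not supply its own proof of this proposition. It is quoted as a known theorem of Steinberg, with a citation to Carter's book (pages 93--95), and is used only as a black box in the arguments that follow. So there is no ``paper's own proof'' to compare against; your argument is essentially the one found in the cited reference.
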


If $G$ is a group of adjoint type and $z$ is of minimal possible order
among all element in the connected component containing $z$, then $G^z$ is also of adjoint type.
For an element $x$ in a group, we write $o(x)$ for the order $x$.

\begin{prop}  \label{FPS adjoint type}
Let $\frg$ be a complex simple Lie algebra.
Suppose that the order of
 $\theta\in\Aut(\frg)$ is equal to the order of
 the coset element $\theta \Int(\frg)$ in $\Out(\frg)=\Aut(\frg)/\Int(\frg)$, i.e.,  $o(\theta)=o(\theta \Int(\frg))$.
Then $Z_{\Int(\frg)}(\Int(\frg)^{\theta}_0)=1$. 
\end{prop}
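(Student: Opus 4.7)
The plan is to analyze the action of $g$ on $\frg$ for any $g\in Z_{\Int(\frg)}(\Int(\frg)^\theta_0)$ via the $\theta$-eigenspace decomposition, apply Schur's lemma and bracket compatibility to pin the action down to a short list in $\Aut(\frg)$, and exclude the non-identity possibilities using the inner/outer dichotomy $\Int(\frg)\hookrightarrow\Aut(\frg)\twoheadrightarrow\Out(\frg)$.

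The case $\theta=\id$ is immediate: $\Int(\frg)^\theta_0=\Int(\frg)$ has centralizer $Z(\Int(\frg))=1$ since $\Int(\frg)$ is adjoint. Otherwise, the hypothesis $o(\theta)=o(\theta\Int(\frg))$ forces $\theta$ to be outer of order $n\in\{2,3\}$, running through the outer involutions on $\mathbf{A}_m$ ($m\geq 2$), $\mathbf{D}_m$, $\mathbf{E}_6$ from Section~\ref{involution representative} and the triality on $\mathbf{D}_4$. Decompose $\frg=\bigoplus_{k=0}^{n-1}\frg_{(k)}$ into $\theta$-eigenspaces for the eigenvalues $e^{2\pi ik/n}$, so that $\frg_{(0)}=\frg^\theta$ and $[\frg_{(j)},\frg_{(k)}]\subset\frg_{(j+k\bmod n)}$. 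Since $\Int(\frg)^\theta_0$ is connected with Lie algebra $\frg^\theta$ and $\frg$ is simple (so has trivial center), $g$ centralizes $\Int(\frg)^\theta_0$ iff $g$ fixes $\frg^\theta$ pointwise as an automorphism of $\frg$; equivalently, $g$ lies in $\End_{\frg^\theta}(\frg)$.

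Inspection of the folded diagrams case by case shows that $\frg^\theta$ is simple, each $\frg_{(k)}$ is an irreducible $\frg^\theta$-module, and the $\frg_{(k)}$ are pairwise non-isomorphic, with the single exception $\frg_{(1)}\cong\frg_{(2)}$ in the triality case. By Schur, for $n=2$ we get $g|_{\frg_{(1)}}=\lambda\cdot\id$ with $\lambda\in\bbC^*$; the non-zero bracket $[\frg_{(1)},\frg_{(1)}]\subset\frg^\theta$ then forces $\lambda^2=1$, and $\lambda=-1$ would realize $g=\theta$, which is outer. In the triality case, the unique (up to scalar) $G_2$-equivariant map $V\otimes V\to V$ on the $7$-dimensional standard representation pins $g|_{\frg_{(1)}\oplus\frg_{(2)}}$ either to a diagonal $\diag(\zeta,\zeta^2)$ with $\zeta^3=1$ (so $g=\theta^j$) or to an involution swapping $\frg_{(1)}\leftrightarrow\frg_{(2)}$ (an outer $S_3$-reflection). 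In every non-identity case $g$ would equal an outer element of $\Aut(\frg)$, contradicting $g\in\Int(\frg)$. Hence $g=\id$.

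The main obstacle is the structural input underlying the Schur step: verifying irreducibility and non-isomorphism of the $\theta$-eigenspaces as $\frg^\theta$-modules, together with the bracket analysis on $\frg_{(1)}\oplus\frg_{(2)}$ in the triality case where $\frg_{(1)}\cong\frg_{(2)}$ prevents a direct application of Schur. Both reduce to short, standard inspections of the five outer families, consistent with the data compiled in Section~\ref{involution representative}.
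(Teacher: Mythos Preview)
Your approach is valid and genuinely different from the paper's. You decompose $\frg$ into $\theta$-eigenspaces, apply Schur's lemma to the $\frg^{\theta}$-module structure, and use bracket compatibility to force $g\in\{\theta^{j}\}$ (or, for triality, $g$ into the $S_{3}$ acting on $\mathfrak{so}(8)$ with fixed algebra $\frg_{2}$), all of whose nontrivial members are outer. The paper instead works with root data: after conjugating $\theta$ to preserve a pair $(\frt,\Delta^{+})$, it shows $Z_{\Int(\frg)}(\exp\frt^{\theta})=T$ and then that any torus element fixing the symmetrized root vectors $\sum_{j}\theta^{j}X_{\alpha}$ must be trivial. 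The paper's argument is uniform in $\theta$ and purely combinatorial; yours is more conceptual but requires the representation-theoretic inputs you flag at the end.

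One genuine inaccuracy to fix: your assertion that ``$\frg^{\theta}$ is simple'' covers only the \emph{pinned} diagram automorphism in each outer coset, whereas the hypothesis $o(\theta)=o(\theta\Int(\frg))$ admits every outer involution. For $\mathbf{A}_{2m-1}$ both types \textbf{AI} and \textbf{AII} satisfy it, and for type \textbf{AI} on $\mathbf{A}_{3}$ one has $\frg^{\theta}\cong\mathfrak{so}(4)$, not simple; likewise for $\mathbf{D}_{m}$ the outer involutions $\Ad(I_{p,2m-p})$ with odd $p\geq 3$ give $\frg^{\theta}\cong\mathfrak{so}(p)\oplus\mathfrak{so}(2m-p)$. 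Fortunately simplicity is not what your argument needs. What you actually use is that $\frg_{(1)}$ is an irreducible $\frg^{\theta}$-module sharing no constituent with the adjoint module $\frg_{(0)}$; this holds for \emph{every} involution of a simple $\frg$ (the pair $(\frg,\frg^{\theta})$ is an irreducible symmetric pair, hence isotropy-irreducible, and $\frg_{(1)}$ is never a summand of the adjoint of $\frg^{\theta}$ by an easy inspection of how the factors act). The nonvanishing $[\frg_{(1)},\frg_{(1)}]\neq 0$ for $n=2$ follows because otherwise $\frg_{(1)}$ would be a proper ideal of $\frg$; for triality one needs a short direct check (or the existence of the outer involution swapping $\frg_{(1)}\leftrightarrow\frg_{(2)}$, reducing to one bracket), which you rightly identify as the only nontrivial verification.
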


\begin{proof}
By the assumption, $\theta$ is a diagram automorphism, this means there exists 
a Cartan subalgebra $\frt$ which is stable under $\theta$ and $\theta$ maps $\Delta^{+}$ to itself. 
For any $\alpha \in \Delta$, let $\theta(X_{\alpha})=a_{\alpha} X_{\theta \alpha}$ with $a_{\alpha}\not=0$.

Let $k=o(\theta)=o(\theta \Int(\frg))$. Then for any $\alpha \in
\Delta$, \[X_{\alpha}=\theta^{k}(X_{\alpha})=(\Pi_{0\leq j \leq
k-1}a_{\theta^{j}\alpha}) X_{\theta^{k} \alpha}.\] It follows that
$\Pi_{0\leq j \leq k-1}a_{\theta^{j}\alpha}=1$.

Let $L=\Int(\frg)^{\theta}_0$, $\frs=\frt^{\theta}$, $T=\exp (\ad \frt)$ and $S=\exp(\ad \frs)$.
It is clear that $S\subset L$. 

\smallskip

We first show that $Z_{\Int(\frg)}(S)=T$. $\frt\subset Z_{\frg}(\frs)$ is clear. Suppose that
$X_{\alpha}\in Z_{\frg}(\frs)$ for some $\alpha\in\Delta^{+}$. Since $\theta^{k}=1$, we have 
$\sum_{0\leq j\leq k-1}\theta^{j}(H)\in\frt^{\theta}=\frs$ for any $H\in \frt$. Then 
$[\sum_{0\leq j\leq k-1}\theta^{j}(H), X_{\alpha}]=0$.

For any $j$, we have 
\begin{eqnarray*}
[\theta^{j}H, X_{\alpha}]&=&\theta^{j}([H, \theta^{k-j}X_{\alpha}])
=\theta^{j}((\Pi_{0\leq i \leq
k-j-1}a_{\theta^{i}\alpha})((\theta^{k-j}\alpha)H))
X_{\theta^{k-j}\alpha}\\&=&(\Pi_{0\leq i \leq
k-j-1}a_{\theta^{i}\alpha})((\theta^{k-j}\alpha)H)(\Pi_{0\leq i \leq
j-1}a_{\theta^{k-j+i}\alpha})X_{\alpha}\\&=&(\Pi_{0\leq i \leq
k-1}a_{\theta^{i}\alpha})((\theta^{k-j}\alpha)H)X_{\alpha}
\\&=&((\theta^{k-j}\alpha)H)X_{\alpha}.
\end{eqnarray*}
Hence $0=[\sum_{0\leq j\leq k-1}\theta^{j}(H),
X_{\alpha}]=((\sum_{0\leq j\leq k-1}\theta^{k-j}\alpha)H)\cdot 
X_{\alpha}$. This implies \[\sum_{0\leq j\leq k-1}\theta^{j}\alpha=0,\]
which contradicts to all $\theta^{j}\alpha$ are positive roots. 
So $Z_{\frg}(\frs)=\frt$. Since $Z_{\Int(\frg)}(S)$ is connected, so 
$Z_{\Int(\frg)}(S) =T.$

\smallskip

Now we show that $Z_{\Int(\frg)}(L)=1$. Suppose that $1\neq\tau\in Z_{\Int(\frg)}(L)$. 
By the above, we have $Z_{\Int(\frg)}(L)\subset Z_{\Int(\frg)}(S)=T$, then 
$\tau=\Exp(\ad H)$ for some $H\in \frt$. For any $\alpha \in \Delta$, $\sum_{0\leq
j\leq k-1}\theta^{j}(X_{\alpha}) \in \frg^{\theta}$ (since $\theta^{k}=1$), so
\begin{eqnarray*}
\sum_{0\leq j\leq k-1}\theta^{j}(X_{\alpha})&=&\tau(\sum_{0\leq
j\leq k-1}\theta^{j}(X_{\alpha}))=\sum_{0\leq j\leq
k-1}\tau(\theta^{j}(X_{\alpha}))\\& =&\sum_{0\leq j\leq k-1}
e^{(\theta^{j}\alpha)H} \theta^{j}(X_{\alpha}).
\end{eqnarray*}
Since each $\theta^{j}(X_{\alpha})$ is of the form
$\theta^{j}(X_{\alpha})=b_{j}X_{\theta^{j}\alpha}$ for some
$b_{j}\not=1$, the last equality implies
$\tau(X_{\alpha})=X_{\alpha}$ if $\{\theta^{j}\alpha, 0\leq j\leq
k-1\}$ are distinct.

Since $\theta$ maps $\Delta^{+}$ to $\Delta^{+}$, a little combinatorial argument  
shows that those $\alpha \in \Delta$ with roots in $\{\theta^{j}\alpha,0\leq j\leq k-1\}$ 
pair-wisely different generate $\Delta$ (there are only 4 cases to check, 
the order 2 automorphism of root systems $\A_{n}$, $\D_{n}$, $\E_6$ and order three automorphism 
of root system $\D_4$). Since $\tau(X_{\alpha})=X_{\alpha}$ when  
$\{\theta^{j}\alpha, 0\leq j\leq k-1\}$ are distinct. So $\tau(X_{\alpha})=X_{\alpha}$ 
for any $\alpha \in\Delta$.  Hence $\tau=1$, which is to say 
$Z_{\Int(\frg)}(\Int(\frg)^{\theta}_0)=1$. 
\end{proof}

\begin{corollary} \label{centralizer}
Let $\fru_0$ be a compact simple Lie algebra.  If $\theta \in
\Aut(\fru_0)$ satisfies the condition $o(\theta)=o(\theta \Int(\fru_0))$, then
$Z_{\Int(\fru_0)}(\Int(\fru_0)^{\theta}_0)=1$.
\end{corollary}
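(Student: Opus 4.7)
The plan is to deduce this corollary from Proposition \ref{FPS adjoint type} by passing to the complexification $\frg$ of $\fru_0$. The key observation is that the Lie algebra of $\Int(\fru_0)^{\theta}_0$ is a compact real form of the Lie algebra of $\Int(\frg)^{\theta}_0$, so centralizing the former in $\Int(\frg)$ coincides with centralizing the latter, and then Proposition \ref{FPS adjoint type} can be invoked directly.

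First, I would view $\theta$ as an element of $\Aut(\frg)$ via the complex-linear extension $\Aut(\fru_0) \hookrightarrow \Aut(\frg)$. Under the isomorphism $\Out(\fru_0) \cong \Out(\frg)$ recalled in Section \ref{a specific compact form}, the hypothesis $o(\theta)=o(\theta\Int(\fru_0))$ is equivalent to $o(\theta)=o(\theta\Int(\frg))$, so Proposition \ref{FPS adjoint type} applies and gives $Z_{\Int(\frg)}(\Int(\frg)^{\theta}_0)=1$.

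Second, since $\theta \in \Aut(\fru_0) = \Aut(\frg)^{\Theta}$, the Cartan involution $\Theta$ commutes with $\theta$ and therefore preserves $\Int(\frg)^{\theta}_0$. The Lie algebra of $\Int(\frg)^{\theta}_0$ is $\frg^{\theta}$, on which $\Theta$ acts as the Cartan involution of $\frg$ with fixed real subspace $\fru_0^{\theta}$. Hence $\Int(\fru_0)^{\theta}_0$ has Lie algebra $\fru_0^{\theta}$, whose complexification is exactly $\frg^{\theta}$. Any element of $\Int(\frg)$ that centralizes the connected subgroup $\Int(\fru_0)^{\theta}_0$ therefore centralizes its Lie algebra $\fru_0^{\theta}$, hence also the complexified Lie algebra $\frg^{\theta}$, and finally the connected group $\Int(\frg)^{\theta}_0$.

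Combining these two observations yields
\[ Z_{\Int(\fru_0)}(\Int(\fru_0)^{\theta}_0) \subset Z_{\Int(\frg)}(\Int(\fru_0)^{\theta}_0) = Z_{\Int(\frg)}(\Int(\frg)^{\theta}_0) = 1, \]
which is the desired conclusion. There is no substantial obstacle here: the argument is a routine reduction, and the only care required is the bookkeeping needed to verify that $\Theta$ commutes with $\theta$ and preserves the identity component of $\Int(\frg)^{\theta}$, so that the real-form relationship between $\Int(\fru_0)^{\theta}_0$ and $\Int(\frg)^{\theta}_0$ can be made precise.
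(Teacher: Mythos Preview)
Your proposal is correct and is exactly the reduction the paper has in mind: the corollary is stated without proof immediately after Proposition \ref{FPS adjoint type}, and your complexification argument (extending $\theta$ to $\Aut(\frg)$, noting that $\fru_0^{\theta}$ spans $\frg^{\theta}$ over $\bbC$, and then invoking the proposition) is the intended justification.
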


%The conclusion above implies the following properties, 
%\begin{itemize}
%\item[1.]{firstly $\Int(\fru_0)^{\theta}_0$ is of adjoint type, in particular semi-simple;} 
%\item[2.]{secondly any element in $\Int(\fru_0)^{\theta}-\Int(\fru_0)^{\theta}_0$ acts on 
%$\fru_0^{\theta}$ as an outer automorphism, so $\Int(\fru_0)^{\theta}$ is connected 
%if $\Out(\fru_0^{\theta})=1$;}
%\item[3.]{thirdly any element in $N_{\Int(\fru_0)}(\Int(\fru_0)^{\theta}_0)-\Int(\fru_0)^{\theta}_0$ 
%acts on $\fru_0^{\theta}$ as an outer automorphism as well, so  
%$N_{\Int(\fru_0)}(\Int(\fru_0)^{\theta}_0)=\Int(\fru_0)^{\theta}_0$ if 
%$\Out(\fru_0^{\theta})=1$.} 
%\end{itemize} 

%%%%%%%%%%%%%%%%%%%%%%%%%%%%%%%%%%%%%%%%%%%%%%%%%%%%%%%%%%%%%%%%%%%%%%%%%%%%%%%%%%%%%%%%%%%%
\section{Symmetric subgroups of $\bf \Aut(\fru_0)$}\label{symmetric subgroup}
%%%%%%%%%%%%%%%%%%%%%%%%%%%%%%%%%%%%%%%%%%%%%%%%%%%%%%%%%%%%%%%%%%%%%%%%%%%%%%%%%%%%%%%%%%%%

We retain the notations that  $\fru_0$ is a compact simple Lie algebra.
For each conjugacy classes of involutions
in $\Aut(\fru_0)$, we make a choice of the
representatives $\theta$ and determine the symmetric subgroup $\Aut(\fru_0)^{\theta}$.

If $\fru_0$ is a classical simple Lie algebra other than
$\mathfrak{so}(8))$, then we can use matrices to represent involutions $\theta$
and calculate the corresponding $\Aut(\fru_0)^{\theta}$.  In the case $\theta=\Ad(I_{4,4})\in\Aut(\mathfrak{so}(8))$,
we see that $\Int(\mathfrak{so}(8))^{\theta}=(\Sp(1)^{4}/Z') \rtimes D$, where
\[Z'=\{(\epsilon_1,\epsilon_2,\epsilon_3,\epsilon_4)|\epsilon_{i}=\pm{1},
\epsilon_1\epsilon_2\epsilon_3\epsilon_4=1\},\]  and $D\subset S_4$ is the (unique) normal
order four subgroup of $S_4$ with conjugation action on $(\Sp(1)^{4})/Z'$ by
permutations. Then we observe that there exists a subgroup $S\subset \Aut(\mathfrak{so}(8))$
projects isomorphically to $\Aut(\mathfrak{so}(8))/\Int(\mathfrak{so}(8))\cong S_3$ and contained in
$\Aut(\mathfrak{so}(8))^{\theta}$. It follows
that $\Aut(\mathfrak{so}(8))^{\theta}=(\Sp(1)^{4})/Z' \rtimes S_4$.

\smallskip
If $\fru_0$ is exceptional, then we
first determine the symmetric subalgebras
$\frk_0=\fru_0^{\theta}$ and the highest weights of the isotropic representations
$\frp_0=\fru_0^{-\theta}$ as $\frk_0$-modules. These 
are listed in Table 1. Since the identity component $\Aut(\fru_0)_0=\Int(\fru_0)$ is of adjoint type,
any element of $\Aut(\fru_0)^{\theta}$ acting trivially on both $\frk_0$
and $\frp_0$ must be trivial.  Thus, the isomorphism types of $\frk_0$ and
the isotropic module $\frp_0$ determine $\Aut(\fru_0)^{\theta}_0$ completely.
We show the detailed consideration case by case.

\begin{table}[ht]
\caption{Symmetrcic paris (exceptional cases)}
\centering
\begin{tabular}{|c |c |c |c |c | c| c|}
\hline & $\theta$  & $\frk_0$ & $\frp$  \\ [0.3ex] \hline

{\bf EI} &$\sigma_4=\tau\exp(\pi i H'_{2})$ & $\mathfrak{sp}(4)$ & $V_{\omega_4}$    \\
\hline

{\bf EII}&$\sigma_1=\exp(\pi i H'_2)$&
$\mathfrak{su}(6)\oplus\mathfrak{sp}(1) $ &
$\wedge^{3}\bbC^{6}\otimes\bbC^{2} $  \\ \hline

{\bf EIII}&$\sigma_2=\exp(\pi i(H'_1+H'_6))$& $\mathfrak{so}(10)\oplus
i\bbR$ & $(M_{+}\otimes 1)\oplus(M_{-}\otimes\overline{1})$ \\
\hline

{\bf EIV}&$\sigma_3=\tau$& $\frf_4$  & $V_{\omega_4}$  \\
\hline

{\bf EV}&$\sigma_3=\exp(\pi i(H'_1+H'_0))$& $\mathfrak{su}(8)$  & $\wedge^{4}\bbC^{8}$ \\
\hline

{\bf EVI}&$\sigma_1=\exp(\pi i H'_2)$& $\mathfrak{so}(12)\oplus\mathfrak{sp}(1)$  & $M_{+}\otimes\bbC^{2}$ \\
\hline

{\bf EVII}&$\sigma_2=\exp(\pi i H'_0)$& $\fre_6\oplus i\bbR$  &
$(V_{\omega_1}\otimes 1)\oplus(V_{\omega_6}\otimes\overline{1}) $ \\
\hline

{\bf EVIII}&$\sigma_2=\exp(\pi i(H'_1+H'_2))$& $\mathfrak{so}(16)$   & $M_{+}$ \\
\hline

{\bf EIX}&$\sigma_1=\exp(\pi i H'_1)$& $\fre_7\oplus\mathfrak{sp}(1)$  & $V_{\omega_7}\otimes\bbC^{2}$\\
\hline

{\bf FI}&$\sigma_1=\exp(\pi i H'_1)$& $\mathfrak{sp}(3)\oplus\mathfrak{sp}(1)$  & $V_{\omega_3}\otimes\bbC^{2}$ \\
\hline

{\bf FII}&$\sigma_2=\exp(\pi i H'_4)$& $\mathfrak{so}(9)$   & $M$ \\
\hline

{\bf G} &$\sigma=\exp(\pi i H'_1)$& $\mathfrak{sp}(1)\oplus\mathfrak{sp}(1)$   & $\Sym^{3}\bbC^{2}\otimes\bbC^{2} $ \\
\hline
\end{tabular}
\end{table}

\subsection{Type $\bf E_6$}
Now $\fru_0=\fre_6$. Suppose that $\theta=\sigma_3$ or $\sigma_4$ is an outer
automorphism.  By Corollary \ref{centralizer}, any element in
$\Int(\fru_0)^{\theta}-\Aut(\fru_0)^{\theta}_0$ acts on
$\fru_0^{\theta}$ as an outer automorphism.  Note that $\fru_0\cong
\mathfrak{sp}(4)$ or $\frf_4$ which do not have any outer
automorphism. It follows that  $\Int(\fru_0)^{\theta}=\Aut(\fru_0)^{\theta}_0$ and
$\Aut(\fru_0)^{\theta}=\Aut(\fru_0)^{\theta}_0\times\langle\theta\rangle$.

\smallskip

Suppose that $\theta=\sigma_1$ or $\sigma_2$ is an inner
automorphism, let $\theta'\in\E_6$ be an involution which maps to $\theta$ under 
the covering $\E_6\longrightarrow\Int(\fre_6)$, we have 
\begin{eqnarray*}&& \Int(\fre_6)^{\theta}=\{g\in \E_6|(\theta'g\theta'^{-1})g^{-1}\in
Z(\E_6)\}/Z(\E_6),\\&& \Int(\fre_6)^{\theta}_0=\{g\in
\E_6|(\theta' g\theta'^{-1})g^{-1}=1\}/Z(\E_6) (\textrm{use Proposition \ref{Steinberg} here}). 
\end{eqnarray*} If $\{g\in\E_6|\theta(g)g^{-1}\in Z(\E_6)\}\not=\E_6^{\theta}$. 
Then there exists $g\in\E_6$ such that \[1\neq\theta'g\theta'^{-1}g^{-1}=c\in Z(E_6).\] 
Then $g\theta' g^{-1}=\theta'c^{-1}$. But $o(\theta')=2\neq 6=o(\theta' c^{-1})$. So 
$g\theta' g^{-1}\neq\theta'c^{-1}$. Then $\{g\in\E_6|\theta(g)g^{-1}\in Z(\E_6)\}=\E_6^{\theta}$ and 
so $\Int(\fre_6)^{\theta}=\Int(\fre_6)^{\theta}_0$. Both
$\sigma_1,\sigma_2$ commutes with $\tau$, so
$\Aut(\fre_6)^{\theta}=\Int(\fre_6)^{\theta}_0\rtimes\langle\tau\rangle$.
The conjugation action of $\tau$ on $\Int(\fre_6)^{\theta}_0$ is determined by its action on
$\frk_0=\fru_0^{\theta}$, and we have
\[(\fre_6^{\sigma_1})^{\tau}=\mathfrak{sp}(3)\oplus\mathfrak{sp}(1),\ 
(\fre_6^{\sigma_2})^{\tau}=\mathfrak{so}(9).\]

\subsection{Type $\bf E_7$}
Now $\fru_0=\fre_7$ and $\Aut(\fre_7)=\Int(\fre_7)$ is connected. Let
$\pi: \E_7\longrightarrow \Aut(\fre_7)$ be a 2-fold covering with $\E_7$ a compact
connected and simply connected Lie group of type $\E_7$. Let
\[\sigma'_1=\exp(\pi iH'_2), \sigma'_2=\exp(\pi i\frac{H'_2+H'_5+H'_7}{2}),
\sigma'_3=\exp(\pi i\frac{2H'_1+H'_2+H'_5+H'_7}{2})\in \E_7. \] Then
$\pi(\sigma'_{i})=\sigma_{i}$, $o(\sigma'_1)=2$, $o(\sigma'_2)=4$
and $o(\sigma'_3)=4$. One has \begin{eqnarray*}&& \Aut(\fre_7)^{\sigma_{i}}\cong
\{g\in \E_7|(g\sigma'_{i}g^{-1})\sigma'^{-1}_{i}\in Z(\E_7)\}/Z(\E_7),\\&&
\Aut(\fre_7)^{\sigma_{i}}_0\cong \{g\in \E_7|(g\sigma'_{i}g^{-1})\sigma'^{-1}_{i}=1\}/Z(\E_7),
\end{eqnarray*} where $Z(\E_7)=\langle\exp(\pi i(H'_2+H'_5+H'_7))\rangle\cong\bbZ/2\bbZ$
is the center of $\E_7$.

\smallskip

For $\theta=\sigma_1$, suppose that there exists $g\in \E_7$ such that
$(g\sigma'_{i}g^{-1})\sigma'^{-1}_{i}=\exp(\pi i(H'_2+H'_5+H'_7))$, then
$g\exp(\pi i H'_2)g^{-1}=\exp(\pi i(H'_5+H'_7))$. Then there exists $w\in W$ such that \[w(\exp(\pi i
H'_2))=\exp(\pi i(H'_5+H'_7)).\] Since $w(\exp(\pi iH'_{\alpha_2})=\exp(\pi iH'_{w(\alpha_2)})$,
we get $\exp(\pi iH'_{w(\alpha_2)})=\exp(\pi i(H'_5+H'_7))$, then
\[w(\alpha_2)\in(\alpha_5+\alpha_7)+2\span_{\bbZ}\{\alpha_1,\alpha_2,\alpha_3,\alpha_4,\alpha_5,
\alpha_6,\alpha_7\}.\] There are no roots in $(\alpha_5+\alpha_7)+2\span_{\bbZ}\{\alpha_1,\alpha_2,
\alpha_3,\alpha_4,\alpha_5,\alpha_6,\alpha_7\}$, so there are no $g\in \E_7$ such that
$(g\sigma'_{1}g^{-1})\sigma'^{-1}_{1}=\exp(\pi i(H'_2+H'_5+H'_7))$. Then
\[\{g\in \E_7|(g\sigma'_{1}g^{-1})\sigma'^{-1}_{1}\in
Z(\E_7)\}=\E_7^{\sigma'_1},\  \Aut(\fre_7)^{\sigma_1}=\Aut(\fre_7)^{\sigma_1}_0.\]

\smallskip

For $\theta=\sigma_2,\sigma_3$, let $\omega=\exp(\frac{\pi
(X_{\alpha_2}-X_{-\alpha_2})}{2}) \exp(\frac{\pi
(X_{\alpha_5}-X_{-\alpha_5})}{2}) \exp(\frac{\pi
(X_{\alpha_7}-X_{-\alpha_7})}{2})$. Then \begin{eqnarray*}&&
\omega\sigma'_2\omega^{-1}=\sigma'^{-1}_2=\sigma'_2\exp(\pi
i(H'_2+H'_5+H'_7)),\\&&
\omega\sigma'_3\omega^{-1}=\sigma'^{-1}_3=\sigma'_3\exp(\pi
i(H'_2+H'_5+H'_7))\end{eqnarray*} and $\omega^{2}=1$. Then
$\Aut(\fre_7)^{\theta}=\Aut(\fre_7)^{\theta}_0\rtimes\langle\omega\rangle$.
The conjugation action of $\omega$ on $\Int(\fre_7)^{\theta}_0$ is determined by its action on
$\frk_0=\fru_0^{\theta}$, and we have
\[(\fre_7^{\sigma_2})^{\omega}=\frf_4,\  (\fre_7^{\sigma_3})^{\omega}=\mathfrak{sp}(4).\]
$\omega$ acts on $\frh$ as $s_{\alpha_2}s_{\alpha_5}s_{\alpha_7}$.

\subsection{Type $\bf E_8,F_4,G_2$} If  $\fru_0=\fre_8,\frf_4,\frg_2$, then $\Aut(\fru_0)$
is connected and simply connected.
By Proposition \ref{Steinberg}, $\Aut(\fru_0)^{\theta}$ is connected. Then they are determined
by $\fru_0^{\theta}$ and $\frp=\frg^{-\theta}$.

\begin{table}[ht]
\caption{Symmetric pairs and symmetric subgroups} \centering
\begin{tabular}{|c |c |c |c |c |}
\hline  Type & $(\fru_0,\frk_0)$  & rank & $\theta$  &
symmetric subgroup $\Aut(\fru_0)^{\theta}$\\ [0.3ex] \hline

{\bf AI} &$(\mathfrak{su}(n),\mathfrak{so}(n))$ & n-1 & $\overline{X}$&
$(O(n)/\langle -I\rangle)\!\times\!\langle\theta\rangle$ \\
\hline

{\bf AII} &$(\mathfrak{su}(2n),\mathfrak{sp}(n))$ & n-1 &
$J_{n}\overline{X}J_{n}^{-1}$&
$(Sp(n)/\langle -I\rangle)\!\times\!\langle\theta\rangle$ \\
\hline

{\bf AIII}
$p\!<\!q$&$(\mathfrak{su}(p\!+\!q),\!\mathfrak{s}(\mathfrak{u}(p)\!+\!
\mathfrak{u}(q)))$& $p$ &
$I_{p,q}X I_{p,q}$&$(S(U(p)\!\times\!U(q))/Z_{p+q})\!\rtimes\!\langle\tau\rangle$\\
&&&&$\Ad(\tau)=\textrm{comlex conjugation}$\\
\hline

{\bf AIII}
$p\!=\!q$&$(\mathfrak{su}\!(2p),\!\mathfrak{s}(\mathfrak{u}(p)\!+\!
\mathfrak{u}(p)))$& $p$ & $I_{p,p}X I_{p,p}$&
$(S(U(p)\!\times\!U(p))/Z_{2p})\rtimes \langle \tau, J_{p}\rangle$\\
&&&&$\Ad(J_{p})(X,Y)=(Y,X)$\\
\hline

{\bf BDI} $p\!<\!q$&
$(\mathfrak{so}(p\!+\!q),\mathfrak{so}(p)+\mathfrak{so}(q))$& $p$ &
$ I_{p,q}X I_{p,q}$&
$(O(p)\times O(q))/\langle(-I_{p},-I_{q})\rangle$\\
\hline

{\bf DI} $p>4$&$(\mathfrak{so}(2p),\mathfrak{so}(p)+\mathfrak{so}(p))$&
$p$ & $I_{p,p}X
I_{p,p}$&$((O(p)\!\times\! O(p))/\langle(-I_{p},-I_{p})\rangle)\!\rtimes\! \langle J_{p}\rangle$\\
&&&&$\Ad(J_{p})(X,Y)=(Y,X)$\\
\hline

{\bf DI} $p=4$&$(\mathfrak{so}(8),\mathfrak{so}(4)+\mathfrak{so}(4))$&
$4$ & $I_{4,4}X I_{4,4}$&$((Sp(1)^{4})/Z') \rtimes S_4$ \\
&&&&$S_4$ acts by permutaions\\
\hline

{\bf DIII}&$(\mathfrak{so}(2n),\mathfrak{u}(n))$& $n$ &
$J_{n}X J_{n}^{-1}$& $(U(n)/\{\pm{I}\})\rtimes\langle I_{n,n}\rangle$\\
&&&&$\Ad(I_{n,n})=\textrm{complex\ conjugation}$\\
\hline

{\bf CI}&$(\mathfrak{sp}(n), \mathfrak{u}(n))$& $n$ &
$(\!\textbf{i}I\!)\!X\!(\!\textbf{i}I\!)^{\!-\!1}$&
$(U(n)/\{\pm{I}\}) \rtimes\langle\textbf{j}I\rangle$\\
&&&&$\Ad(\textbf{j}I)=\textrm{complex\ conjugation}$\\
\hline

{\bf CII} $p\!<\!q$&
$(\mathfrak{sp}(p\!+\!q),\mathfrak{sp}(p)\!+\!\mathfrak{sp}(q))$&
$p$ &
$I_{p,q}X I_{p,q}$&$(Sp(p)\times Sp(q))/\langle(-I_{p},-I_{q})\rangle$\\
\hline

{\bf CII}
$p\!=\!q$&$(\mathfrak{sp}(2p),\mathfrak{sp}(p)\!+\!\mathfrak{sp}(p))$&
$p$ & $I_{p,p}X I_{p,p}$& $((Sp(p)\!\times\!Sp(p)/\langle(-I_{p},-I_{p})\rangle)\!\rtimes\! \langle J_{p}\rangle$\\
&&&&$\Ad(J_{p})(X,Y)=(Y,X)$\\
\hline

{\bf EI} &($\fre_{6}$, $\mathfrak{sp}(4)$)&6 &$\sigma_4$ &
$(Sp(4)/\langle-1\rangle)\times \langle\theta\rangle$ \\
\hline

{\bf EII}&($\fre_{6}$,
$\mathfrak{su}(6)\!+\!\mathfrak{sp}(1)$)&4&$\sigma_1$
&$(\!SU\!(6)\!\!\times\!\!Sp(1)\!/\!\langle(e^{\frac{2\pi
i}{3}}\!I\!,\!1\!),\!(\!-\!I\!,\!-\!1\!)\rangle\!)
\!\rtimes\!\langle\tau\rangle$ \\
&&&&$\frk_0^{\tau}=\mathfrak{sp}(3)\oplus\mathfrak{sp}(1)$\\
\hline

{\bf EIII}&($\fre_{6}$, $\mathfrak{so}(10)+i\bbR$) &2&$\sigma_2$&
$(\Spin(10)\times U(1)/\langle(c,i)\rangle)\rtimes\langle\tau\rangle$\\
&&&&$\frk_0^{\tau}=\mathfrak{so}(9)$\\
\hline

{\bf EIV}&($\fre_{6}$, $\frf_4$)&$2$&$\sigma_3$& $F_4\times\langle\theta\rangle$\\
\hline

{\bf EV}&($\fre_{7}$, $\mathfrak{su}(8)$)&7&$\sigma_3$&
$(SU(8)/\langle iI\rangle)\rtimes\langle\omega\rangle$\\
&&&&$\frk_0^{\omega}=\mathfrak{sp}(4)$\\
\hline

{\bf EVI}&($\fre_{7}$,
$\mathfrak{so}(12)+\mathfrak{sp}(1)$)&$4$&$\sigma_1$&
$(\Spin(12)\!\times \!Sp(1))/\langle(c,1),(-1,-1)\rangle$\\
\hline

{\bf EVII}&($\fre_{7}$, $\fre_6+i\bbR$)&3&$\sigma_2$& $((E_{6}\times
U(1))/\langle (c,e^{\frac{2\pi i}{3}})\rangle)
\rtimes\langle\omega\rangle$\\
&&&&$\frk_0^{\omega}=\frf_4$\\
\hline

{\bf EVIII}&($\fre_{8}$, $\mathfrak{so}(16)$)&$8$&$\sigma_2$&
$\Spin(16)/\langle c\rangle$\\
\hline

{\bf EIX}&($\fre_{8}$, $\fre_7+\mathfrak{sp}(1)$)&4&$\sigma_1$
& $E_{7}\times Sp(1)/\langle(c,-1)\rangle$\\
\hline

{\bf FI}&($\frf_{4}$,
$\mathfrak{sp}(3)+\mathfrak{sp}(1)$)&$4$&$\sigma_1$&$(Sp(3)\times Sp(1))/\langle(-I,-1)\rangle$\\
\hline

{\bf FII}&($\frf_{4}$, $\mathfrak{so}(9)$)&$1$&$\sigma_2$&$\Spin(9)$\\
\hline

{\bf G} &($\frg_{2}$, $\mathfrak{sp}(1)+\mathfrak{sp}(1)$)&2&
$\sigma$&$(Sp(1)\times Sp(1))/\langle(-1,-1)\rangle$\\
\hline
\end{tabular}
\end{table}

\newpage

%%%%%%%%%%%%%%%%%%%%%%%%%%%%%%%%%%%%%%%%%%%%%%%%%%%%%%%%%%%%%%%%%%%%%%%%%%%%%%
\section{Klein four subgroups of $\bf \Aut(\fru_0)$}  \label{Klein four}
%%%%%%%%%%%%%%%%%%%%%%%%%%%%%%%%%%%%%%%%%%%%%%%%%%%%%%%%%%%%%%%%%%%%%%%%%%%%%%

In this section, we
classify Klein four subgroups $\Gamma$ in $\Aut(\fru_0)$ up to conjugation.
We also determine the point subgroup $\Aut(\fru_0)^{\Gamma}$.
Note that such a $\Gamma$ is equal to $\{1,\theta,\sigma,\theta\sigma\}$
for two commuting involutions $\theta\neq\sigma$.
Fix an involution $\theta$, the conjugacy classes of $\Gamma$ containing
$\theta$ is determined by the conjugacy classes of involutions $\sigma\neq \theta$ in
$\Aut(\fru_0)^{\theta}$.

\subsection{Ordered commuting pairs of involutions and semisimple symmetric pairs} \label{symmetric involutions}

For a compact simple Lie algebra $\fru_0$ and its complexification $\frg$, the isomorphism 
classes of semisimple symmetric pairs $(\frg_0,\frh_0)$ with $\frg_0$ a real from of $\frg$ are 
in one-one correspondence with the conjugacy classes of ordered commuting pairs of 
(distinct) involutions $(\theta,\sigma)$ in $\Aut(\fru_0)$. When $\theta$ is fixed, the conjugacy 
classes of the pairs $(\theta,\sigma)$ in $\Aut(\fru_0)$ are in one-one correspondence with 
the $\Aut(\fru_0)^{\theta}$-conjugacy classes of involutions in $\Aut(\fru_0)^{\theta}-\{\theta\}$.

For $\fru_0$ an exceptional compact simple Lie algebra, and any representative $\theta$ of
involution classes in Subsection 2.3, we give the representatives of classes of
involutions in $\Aut(\fru_0)^{\theta}-\{\theta\}$ and identify their classes in $\Aut(\fru_0)$. 
For any classical compact simple Lie algebra $\fru_0$ and representative $\theta$ of
involution class, we have a similar classification of involutions in 
$\Aut(\fru_0)^{\theta}-\{\theta\}$, we omit it here and remark that the representatives 
can be constructed from Table 3 (Klein four subgroups). This gives a new proof to Berger 's 
classification of semisimple symmetric pairs. 

\smallskip

In most cases the symmetric subgroup $\Aut(\fru_0)^{\theta}$ is a product of classical groups 
with some twisting, for which we can classify their involution classes by matrix calculations. 
In the remain cases, $\fru_0^{\theta}=\frs_0\oplus\frz$ for an exceptional simple algebra 
$\frs_0$ an an abelian algebra $\frz=0,\ i\bbR, \textrm{ or }\mathfrak{sp}(1)$. We have a homomorphism 
$p:\Aut(\fru_0)^{\theta}\longrightarrow\Aut(\fru_0)$, then what we need to do is to   
classify involutions in $p^{-1}(\sigma)$ for $\sigma\in\Aut(\frs_0)$ an involution or the identity, 
which is not hard.
    
Note that for an exceptional compact simple Lie algebra $\fru_0$ the conjugacy class of an involution 
$\sigma\in\Aut(\fru_0)$ is determined by $\dim\frg^{\sigma}$ (this is an accident phenomenon observed by Helgason). 
For any $\sigma\in\Aut(\fru_0)^{\theta}-\{\theta\}$, the class of $\sigma$ in
$\Aut(\fru_0)$ is determined by $\dim \frg^{\sigma}=\dim\frk^{\sigma}+\frp^{\sigma}$ and 
$\dim\frk^{\sigma},\frp^{\sigma}$ are calculated from the class of $\sigma$ in
$\Aut(\fru_0)^{\theta}$ and the isomorphism types of $\fru_0^{\theta},\frp$.

\subsubsection{Type $\bf E_6$}
Now $\fru_0=\fre_6$. For $\theta=\sigma_1=\exp(\pi i H'_2)$, one has
\[\Aut(\fru_0)^{\sigma_1}=(\!\SU\!(6)\!\!\times\!\!\Sp(1)\!/\!
\langle(e^{\frac{2\pi i}{3}}\!I\!,\!1\!),\!(\!-\!I\!,\!-\!1\!)\rangle\!)
\!\rtimes\!\langle\tau\rangle,\ \sigma_1=(I,-1)=(-I,1),\] where
$\Ad(\tau)(X,Y)=(J_3\overline{X}J_{3}^{-1},Y)$. Then in $\Aut(\fru_0)$,
\begin{eqnarray*}
&&(\left(
\begin{array} {cc} -I_4&0 \\ 0& I_2 \end{array} \right),1) \sim
\sigma_2,\ \ (\left( \begin{array}{cc} -I_2&0 \\ 0& I_4 \end{array}
\right),1) \sim\sigma_1,\\&& (\left( \begin{array}{cc} iI_5&0 \\ 0&
-iI_1\end{array} \right),\textbf{i})\sim \sigma_2,\ \  (\left(\begin{array}{cc} iI_3&0 \\
0& -iI_3\end{array} \right), \textbf{i})\sim \sigma_1,\\&&
\tau\sim\sigma_3,\ \tau\sigma_1\sim\sigma_4,\ \tau(J_{3},\bf i)\sim\sigma_4.
\end{eqnarray*} And these elements give all representatives of conjugacy  classes of involutions in
$\Aut(\fru_0)^{\theta}-\{\theta\}$.

\smallskip

For $\theta=\sigma_2=\exp(\pi i (H'_1+H'_6))$, one has
\[\Aut(\fru_0)^{\sigma_2}=
((\Spin(10)\times\U(1))/\langle(c,i)\rangle)\rtimes\langle\tau\rangle,
\sigma_2=(-1,1)=(1,-1),\]  where $c=e_1 e_2...e_{10}$
and $\Ad(\tau)(x,z)=((e_1e_2\cdots e_9)x(e_1e_2\cdots e_9)^{-1},z^{-1})$. Then in $\Aut(\fru_0)$,
\begin{eqnarray*} &&(e_1 e_2 e_3
e_4,1)\sim \sigma_1,\ \ (e_1 e_2...e_8,1)\sim \sigma_2, \\&&
(\delta,\frac{1+i}{\sqrt{2}})\sim
\sigma_2,\ (-\delta, \frac{1+i}{\sqrt{2}} )\sim\sigma_1,\\&&
\tau\sim\sigma_3,\ \tau(e_1e_2e_3e_4,1)\sim\sigma_4,
\end{eqnarray*}
where $\delta=\frac{1+e_1e_2}{\sqrt {2}}\frac{1+e_3e_4} {\sqrt
{2}}...\frac{1+e_{9}e_{10}}{\sqrt {2}}$.
And these elements exhaust the representatives of conjugacy  classes of involutions in
$\Aut(\fru_0)^{\theta}-\{\theta\}$.

\smallskip

For $\theta=\sigma_3=\tau$, one has
\[\Aut(\fru_0)^{\sigma_3}=\F_{4}\times\langle\tau\rangle.\] Let
$\tau_1,\tau_2$ be involutions in $\F_4$ with \[\frf_4^{\tau_1}
\cong \mathfrak{sp}(3)\oplus \mathfrak{sp}(1),\ \frf_4^{\tau_2}\cong
\mathfrak{so}(9).\]  Then in $\Aut(\fru_0)$,
\begin{eqnarray*}
&&\tau_1\sim \sigma_1,\tau_2 \sim \sigma_2, \sigma_3\tau_1\sim
\sigma_4,\ \sigma_3\tau_2\sim\sigma_3.
\end{eqnarray*}
And these elements exhaust the representatives of conjugacy  classes of involutions in
$\Aut(\fru_0)^{\theta}-\{\theta\}$.

\smallskip

For $\theta=\sigma_4=\tau \exp(\pi i H'_2)$, one has
\[\Aut(\fru_0)^{\sigma_4}=(\Sp(4)/\langle -I\rangle)\times\langle\sigma_4\rangle.\]
Let $\tau_1=\textbf{i}I$, $\tau_2=\left(\begin{array}{cc} -I_2&0\\ 0& I_2\\
\end{array}  \right)$, $\tau_3=\left(\begin{array}{cc} -1&0\\ 0& I_3\\
\end{array}  \right)$.  Then in $\Aut(\fru_0)$,
\begin{eqnarray*}
&&\tau_1\sim\sigma_1,\ \tau_2\sim\sigma_2,\ \tau_3\sim\sigma_1, \\&&
\sigma_4\tau_1\sim\sigma_4,\ \sigma_4\tau_2\sim\sigma_4,\
\sigma_4\tau_3\sim \sigma_3.\end{eqnarray*}
And these elements exhaust the representatives of conjugacy  classes of involutions in
$\Aut(\fru_0)^{\theta}-\{\theta\}$.

\subsubsection{Type $\bf E_7$}
Now $\fru_0=\fre_7$.  For $\theta=\sigma_1=\exp(\pi i H'_2)$, one has
\[\Int(\fru_0)^{\sigma_1}=(\Spin(12)\times \Sp(1))/\langle(c,1),
(-1,-1)\rangle,\] where $\sigma_1=(-1,1)=(1,-1)$, $c=e_1
e_2...e_{12}$.  Set $\delta=\frac{1+e_1e_2}{\sqrt
{2}}\frac{1+e_3e_4}{\sqrt {2}}...\frac{1+e_{11}e_{12}}{\sqrt {2}}$.
Then in $\Aut(\fru_0)$,
\begin{eqnarray*}
&&(e_1 e_2 e_3 e_4, 1)\sim \sigma_1,\ (e_1 e_2, \bf{i})\sim\sigma_2,\
(e_1 e_2...e_6, \bf i)\sim\sigma_3,\\&& (\Pi, 1)\sim \sigma_2,\
(-\delta,1)\sim \sigma_3,\ (e_1\Pi e_1,\bf i)\sim \sigma_1.
\end{eqnarray*} And these elements give all representatives of all conjugacy classes of involutions in
$\Aut(\fru_0)^{\theta}-\{\theta\}$. Moreover,
\[\langle \sigma_1,(e_1 e_2 e_3 e_4, 1)\rangle\sim F_2,
\langle \sigma_1,(e_1\delta e_1,i) \rangle \sim F_1.\]

\smallskip

For $\theta=\sigma_2=\tau=\exp(\pi i \frac{H'_2+H'_5+H'_7}{2})$, one has
\[\Aut(\fru_0)^{\sigma_2}_0=((\E_{6}\times
\U(1))/\langle(c,e^{\frac{2\pi i}{3}})\rangle)\rtimes\langle\omega\rangle,\] where $c$ is a
non-trivial central element of $\E_{6}$, $o(c)=3$,
$\sigma_2=(1,-1)$ and $(\fre_6\oplus i\bbR)^{\omega}=\frf_4\oplus 0$.
Let $\tau_1,\tau_2$ be involutions in
$\E_6$ with \[\fre_6^{\tau_1}\cong \mathfrak{su}(6)\oplus \mathfrak{sp}(1),\
\fre_6^{\tau_2}\cong \mathfrak{so}(10)\oplus i\bbR.\] Then in $\Aut(\fru_0)$,
\begin{eqnarray*}
&&\tau_1\sim\sigma_1,\ \tau_2\sim\sigma_1,\\&&
\tau_1\sigma_2\sim\sigma_3,\
\tau_2\sigma_2\sim\sigma_2, \\&&
\omega\sim\sigma_2,\ \omega\eta\sim\sigma_3,\end{eqnarray*}
where $\eta\in \F_4=\E_6^{\omega}$ is an involution with
$\frf_4^{\eta}\cong\mathfrak{sp}(3)\oplus\mathfrak{sp}(1)$.
And these give all representatives of conjugacy classes of involutions in
$\Aut(\fru_0)^{\theta}-\{\theta\}$.

\smallskip

For $\theta=\sigma_3=\exp(\pi i \frac{H'_2+H'_5+H'_7+2H'_1}{2})$, one has
\[\Aut(\fru_0)^{\sigma_3}_0=(\SU(8)/\langle iI\rangle)\rtimes\langle\omega\rangle,
\sigma_3=\frac{1+i}{\sqrt{2}} I,\] where $\Ad(\omega)X=J_4\overline{X}J_4^{-1}$.
Let $\tau_1=\left(\begin{array}{cc}-I_{2}&\\&I_{6}\end{array}\right)$,
$\tau_2=\left(\begin{array}{cc}-I_{4}&\\&I_{4}\end{array}\right)$.
Then in $\Aut(\fru_0)$, \begin{eqnarray*}
&&\tau_1\sim\sigma_1,\ \tau_2\sim\sigma_1,\\&& \tau_1\sigma_3\sim\sigma_2,\
\tau_2\sigma_3\sim\sigma_3,\\&& \omega\sim\sigma_2,\ \omega\sigma_3\sim\sigma_3,\
\omega J_4\sim\sigma_3.\end{eqnarray*}
And these give all representatives of conjugacy classes of involutions in
$\Aut(\fru_0)^{\theta}-\{\theta\}$.

\subsubsection{Type $\bf E_8$}

Now $\fru_0=\fre_8$. For $\theta=\sigma_1=\exp(\pi i H'_2)$, one has
\[\Aut(\fru_0)^{\sigma_1}\cong (\E_7\times\Sp(1))/\langle(c,-1)\rangle,\] where
$\sigma_1=(1,-1)=(c,1)$. Let $\tau_1, \tau_2$ denote the elements in $\E_7$ with
$\tau_1^{2}=\tau_2^{2}=c$ and $\fre_7^{\tau_1}\cong\fre_6\oplus
i\bbR$, $\fre_7^{\tau_2}\cong \mathfrak{su}(8)$. Let  $\tau_3,\tau_4$ be
involutions in $\E_7$ such that there exist Klein Four subgroups $\Gamma,\Gamma'\subset E_7$ with
three non-identity elements in $\Gamma$ are all conjugate to $\tau_3$, three non-identity
elements in $\Gamma'$ are all conjugate to $\tau_4$, and
$\fre_7^{\Gamma}\cong \mathfrak{su}(6)\oplus (i\bbR)^{2}$,
$\fre_7^{\Gamma'}\cong \mathfrak{so}(8)\oplus (\mathfrak{sp}(1))^{3}$.
Then in $\Aut(\fru_0)$,
\[(\tau_1,\textbf{i})\sim \sigma_1,\ (\tau_2,\textbf{i}) \sim \sigma_2.\]
\[(\tau_3,1)\sim\sigma_1, (\tau_4,1)\sim\sigma_2.\]
And these give all representatives of conjugacy classes of involutions in
$\Aut(\fru_0)^{\theta}-\{\theta\}$.

\smallskip

For $\theta=\sigma_2=\exp(\pi i (H'_2+H'_1))$, one has
\[\Aut(\fru_0)^{\sigma_2}\cong \Spin(16)/\langle c\rangle,\] where
$\sigma_2=-1$, $c=e_1 e_2... e_{16}$. Let
\begin{eqnarray*}&&\tau_1=\ e_1e_2e_3e_4,\ \tau_2=e_1e_2e_3...e_8,\\&&
\tau_3=\delta= \frac{1+e_1e_2}{\sqrt{2}}\frac{1+e_3e_4}{\sqrt
{2}}...\frac{1+e_{15}e_{16}} {\sqrt {2}},\
\tau_4=-\delta.\end{eqnarray*} Then in $\Aut(\fru_0)$,
\[\tau_1\sim \sigma_1, \tau_2\sim
\sigma_2,\] \[\tau_3\sim\sigma_1, \ \tau_4 \sim\sigma_2.\]
And these give all representatives of conjugacy classes of involutions in
$\Aut(\fru_0)^{\theta}-\{\theta\}$.

\subsubsection{Type $\bf F_4$}

When $\fru_0=\frf_4$, for $\theta=\sigma_1=\exp(\pi i H'_1)$,
\[\Aut(\fru_0)^{\sigma_1}\cong\Sp(3)\times\Sp(1)/\langle (-I,-1)
\rangle,\] where $\sigma_1=(-I,1)=(I,-1)$. Let
\[\tau_1=(\left(\begin{array}{ccc}-1&0&0 \\ 0&1&0 \\0&0&1\\
\end{array} \right),1),\ \tau_2=(\left(\begin{array}{ccc}-1&0&0 \\
0&-1&0 \\ 0&0 &1 \\  \end{array} \right),1),\ \tau_3=(\textbf{i}I,
\textbf{i}),\] then in $\Aut(\fru_0)$,
\[\tau_1\sim\sigma_1, \tau_2\sim\sigma_2, \tau_3\sim\sigma_1.\]
And these elements represent all involution classes in
$\Aut(\fru_0)^{\theta}-\{\theta\}$.

\smallskip

For $\theta=\sigma_2=\exp(\pi i H'_4)$, one has \[\Aut(\fru_0)^{\sigma_2}\cong \Spin(9), \sigma_2=-1.\]
Let $\tau_1=e_1e_2e_3e_4$, $\tau_2=e_1e_2e_3...e_8$. Then in $\Aut(\fru_0)$,
\[\tau_1\sim \sigma_1,\ \tau_2\sim\sigma_2.\]
And these give all representatives of conjugacy classes of involutions in
$\Aut(\fru_0)^{\theta}-\{\theta\}$.

\subsubsection{Type $\bf G_2$}
When $\fru_0=\frg_2$ and $\theta=\sigma=\exp(\pi i H'_1)$, one has
\[\Aut(\fru_0)^{\sigma_1}\cong\Sp(1)\times\Sp(1)/\langle (-1,-1)
\rangle,\] where $\sigma_1=(-1,1)=(1,-1)$. Denote $\tau=(\textbf{i},
\textbf{i})$.  Then in $\Aut(\fru_0)$, $\tau\sim\sigma$.  $\tau$ represents
the unique cojugacy class of involutions in
$\Aut(\fru_0)^{\theta}-\{\theta\}$.

\medskip

In the above, we reprove Berger 's classification of semisimple symmetric pairs. 

\begin{prop}\label{Berger}
There are respectively 23, 19, 8, 5, 1 isomorphism classes of (non-trivial, i.e., $\frh_0\neq\frg_0$) 
semisimple symmetric pairs $(\frg_0,\frh_0)$ with $(\frg_0)\otimes_{\bbR}\bbC$ a complex simple 
Lie algebra of type $\bf E_6,\bf E_7, \bf E_8, \bf F_4, \bf G_2$.  
\end{prop}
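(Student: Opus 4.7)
The plan is to count, for each exceptional type, the conjugacy classes of ordered pairs of distinct commuting involutions $(\theta,\sigma)$ in $\Aut(\fru_0)$ and then invoke the one–one correspondence, recalled at the beginning of Section~5, between such conjugacy classes and isomorphism classes of semisimple symmetric pairs $(\frg_0,\frh_0)$ with $(\frg_0)_{\bbC} = \frg$. Fixing a representative $\theta$ of each $\Aut(\fru_0)$-conjugacy class of involutions (listed in Subsection~2.3), the pairs of distinct involutions with first component $\theta$ correspond bijectively to the $\Aut(\fru_0)^{\theta}$-conjugacy classes of involutions $\sigma \in \Aut(\fru_0)^{\theta} \setminus \{\theta\}$. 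So the total count is
\[
\sum_{[\theta]} \#\bigl\{\text{involution classes in } \Aut(\fru_0)^{\theta} \setminus \{\theta\}\bigr\},
\]
summed over $\Aut(\fru_0)$-classes $[\theta]$.

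The main work was carried out case by case in Subsections~5.1.1--5.1.5: for each representative $\theta$, using the explicit description of $\Aut(\fru_0)^{\theta}$ from Table~2 together with the identification of $\dim\frg^{\sigma}$ invoked via Helgason's observation, we listed the full set of $\Aut(\fru_0)^{\theta}$-classes of involutions in $\Aut(\fru_0)^{\theta} \setminus \{\theta\}$. Thus it remains only to tally the class counts from those subsections. Concretely:
\begin{itemize}
\item $\fru_0 = \fre_6$: classes of $\theta$ are $\sigma_1,\sigma_2,\sigma_3,\sigma_4$ contributing $7,6,4,6$, for a total of $23$.
\item $\fru_0 = \fre_7$: classes of $\theta$ are $\sigma_1,\sigma_2,\sigma_3$ contributing $6,6,7$, for a total of $19$.
\item $\fru_0 = \fre_8$: classes of $\theta$ are $\sigma_1,\sigma_2$ contributing $4,4$, for a total of $8$.
\item $\fru_0 = \frf_4$: classes of $\theta$ are $\sigma_1,\sigma_2$ contributing $3,2$, for a total of $5$.
\item $\fru_0 = \frg_2$: the unique class of $\theta$ contributes $1$.
\end{itemize}

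To justify that each subsection's enumeration is complete and non-redundant, I would appeal to the structural description of $\Aut(\fru_0)^{\theta}$ given in Table~2. In each case this group is either (i) a product of classical groups modulo a finite center, possibly with a $\bbZ_2$ twist, for which matrix conjugacy of involutions is standard, or (ii) an extension whose identity component is $\frs_0 \oplus \frz$ with $\frs_0$ exceptional and $\frz$ of dimension $\le 3$; in case (ii) the projection $p:\Aut(\fru_0)^{\theta}\to \Aut(\frs_0)$ lets us enumerate involutions by first classifying their images in $\Aut(\frs_0)$ (which is known inductively) and then handling the finitely many lifts in $p^{-1}$. No two listed elements lie in the same $\Aut(\fru_0)^{\theta}$-class because the invariant $\dim\frg^{\sigma} = \dim\frk^{\sigma}+\dim\frp^{\sigma}$, which can be computed from the $\Aut(\fru_0)^{\theta}$-action on $\frk_0$ and $\frp_0$ in Table~1, separates them; and completeness is checked by exhausting all possibilities in $p^{-1}$ of each involution class of $\Aut(\frs_0)$.

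The main obstacle is the bookkeeping in case (ii), especially for $\theta=\sigma_2$ in $\fre_7$ (with $\Aut(\fru_0)^{\theta}_0 = (\E_6 \times \U(1))/\langle(c,e^{2\pi i/3})\rangle$ twisted by $\omega$) and the two outer classes in $\fre_6$: one must verify, for each lift of an inner involution of $\E_6$ or $\E_7$, that its square in the quotient is trivial and that distinct lifts either coincide under the twist $\omega$ or give genuinely distinct conjugacy classes. Once this case-by-case bookkeeping is in place the counts follow, and summing yields the claimed numbers $23$, $19$, $8$, $5$, $1$, recovering Berger's list.
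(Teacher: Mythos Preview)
Your approach is the paper's own: the proposition is simply a tally of the case-by-case enumerations carried out in Subsections~5.1.1--5.1.5, and your counts $7+6+4+6=23$, $6+6+7=19$, $4+4=8$, $3+2=5$, and $1$ are correct.

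One point needs correction. The single invariant $\dim\frg^{\sigma}$ does \emph{not} separate $\Aut(\fru_0)^{\theta}$-conjugacy classes. For example, in the $\fre_6$, $\theta=\sigma_1$ list, both $(I_{2,4},1)$ and $(\diag(iI_3,-iI_3),\mathbf{i})$ are $\Aut(\fre_6)$-conjugate to $\sigma_1$, hence share $\dim\frg^{\sigma}=38$, yet they lie in distinct $\Aut(\fru_0)^{\theta}$-classes. Non-redundancy actually comes from the direct classification of involutions in the symmetric subgroup $\Aut(\fru_0)^{\theta}$ using its explicit description in Table~2 (matrix calculation when the factors are classical, the projection $p$ when an exceptional factor is present); this is how the paper argues. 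Helgason's dimension observation is used only in the other direction---to identify which $\Aut(\fru_0)$-class each representative $\sigma$ falls into---not to distinguish $\Aut(\fru_0)^{\theta}$-classes from one another.
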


\subsection{Klein four subgroups, their centralizers and symmetric pairs}

For a Klein four group $\Gamma\subset\Aut(\fru_0)$, we call the distribution of conjugacy classes of 
involutions in $\Gamma$ the {\it involution type} of $\Gamma$, we call the distribution of the 
classes of Riemannian symmetric pairs corresponding with involutions in $\Gamma$ the 
{\it symmetric space type} of $\Gamma$. Since these two types have one-one correspondence,  
so we just call the type of $\Gamma$ to mean its involution type or symmetric space type.  

\smallskip

For a compact simple Lie algebra $\fru_0$, a Klein four subgroup of $\Aut(\fru_0)$ is called 
{\it regular} if any two distinct conjugate (in $\Aut(\fru_0)$) elements 
$\sigma,\theta\in\Gamma$ is conjugate by an element $g\in\Aut(\fru_0)$ commuting with $\theta\sigma$
 (i.e. $g\in\Aut(\fru_0)^{\theta\sigma}$). 
%It turns out that all the Klein four subgroups are special.

\smallskip

A Klein four subgroup $\Gamma\subset\Aut(\fru_0)$ is called {\it special} if there are two (distinct) 
elements of $\Gamma$ which are conjugate in $\Aut(\fru_0)$, it is called {\it very special} if 
three involutions of $\Gamma$ are pair-wisely conjugate in $\Aut(\fru_0)$, otherwise it is called 
non-special. The definition of special is due to \cite{Oshima-Sekiguchi}. 

\smallskip

In Table 3 and Table 4, we list some Klein four subgroups $\Gamma_{i}\subset\Aut(\fru_0)$ for
each compact simple Lie algebra $\fru_0$ together with their symmetric space types 
(when $\fru_0$ is classical) or involution types (when $\fru_0$ is exceptional). These subgroups are 
not conjugate to each other since their fixed point subalgebra $\fru_0^{\Gamma_{i}}$ are non-isomorphic.
In the last column we also indicate whether they are special, where N=non-special, 
S=special but not very special, V= very special. Since some rows of Table 3 is a family of Klein four 
subgroups rather than a single one, in this case we use NS, SV, NSV (with the obvious meaning) to 
denote the speciality of subgroups in them. 

%We now show that this list exhausts all conjugacy classes of Klein four subgroups.

%%%%%%%%%%%%%%%%% Table 3 %%%%%%%%%%
\begin{table}[ht]
\caption{Klein four subgroups in $\Aut(\fru_0)$ for classical cases}
\centering
\begin{tabular}{|c |c |c |c |c}
\hline
$\fru_0$ & $\Gamma_{i}$ & $\frl_0=\fru_0^{\Gamma_{i}}$ & Type\\ [0.5ex]
\hline

$\mathfrak{su}(p+q)$& $\Gamma_{p,q}=\langle \tau, I_{p,q}\rangle$ &
$\mathfrak{so}(p)+\mathfrak{so}(q)$ & \textbf{AI-AI-AIII}, NS\\
\hline

$\mathfrak{su}(2p)$& $\Gamma_{p}=\langle \tau, J_{p}\rangle$ &
$\mathfrak{u}(p)$ & \textbf{AI-AII-AIII}, N \\
\hline

$\mathfrak{su}(2p+2q)$& $\Gamma'_{p,q}=\langle\tau
J_{p+q},I'_{p,q}\rangle$ & $\mathfrak{sp}(p)+\mathfrak{sp}(q)$ &
\textbf{AII-AII-AIII}, NS\\
\hline

$\mathfrak{su}(p+q+r+s)$& $\Gamma_{p,q,r,s}$ &
$\mathfrak{s}(\mathfrak{u}(p)+\mathfrak{u}(q)+\mathfrak{u}(r)+\mathfrak{u}(s))$
& \textbf{AIII-AIII-AIII}, NSV\\
\hline

$\mathfrak{su}(2p)$& $\Gamma_{p}=\langle I_{p,p}, J_{p}\rangle$ &
$\mathfrak{su}(p)$ & \textbf{AIII-AIII-AIII}, V \\
\hline

$\mathfrak{so}(p+q+r+s)$& $\Gamma_{p,q,r,s}$ &
$\mathfrak{so}(p)+\mathfrak{so}(q)+\mathfrak{so}(r)+\mathfrak{so}(s))$
& \textbf{BDI-BDI-BDI}, NSV \\
\hline

$\mathfrak{so}(2p)$& $\Gamma_{p}=\langle J_{p}, I_{p,p}\rangle$ &
$\mathfrak{so}(p)$ & \textbf{DI-DI-DIII}, S\\
\hline

$\mathfrak{so}(2p+2q)$& $\Gamma_{p,q}=\langle J_{p+q}, I'_{p,q}\rangle$ &
$\mathfrak{u}(p)+\mathfrak{u}(q)$ & \textbf{DI-DIII-DIII}, S\\
\hline

$\mathfrak{so}(4p)$& $\Gamma'_{p}=\langle J_{2p},K_{p}\rangle$ & $\mathfrak{sp}(p)$ & 
\textbf{DIII-DIII-DIII}, V\\
\hline

$\mathfrak{sp}(p)$& $\Gamma_{p}=\langle \textbf{i}I,\textbf{j}I\rangle$ &
$\mathfrak{so}(p)$ & \textbf{CI-CI-CI}, V\\
\hline

$\mathfrak{sp}(p+q)$& $\Gamma_{p,q}=\langle \textbf{i}I, I_{p,q}\rangle$
& $\mathfrak{u}(p)+\mathfrak{u}(q)$ & \textbf{CI-CI-CII}, S\\
\hline

$\mathfrak{sp}(2p)$& $\Gamma'_{p}=\langle \textbf{i}I,
\textbf{j}J_{p}\rangle$ & $\mathfrak{sp}(p)$ & \textbf{CI-CII-CII}, S\\
\hline

$\mathfrak{sp}(p+q+r+s)$& $\Gamma_{p,q,r,s}$ &
$\mathfrak{sp}(p)+\mathfrak{sp}(q)+\mathfrak{sp}(r)+\mathfrak{sp}(s)$
& \textbf{CII-CII-CII}, NSV\\
\hline

\end{tabular}
\end{table}

%%%%%%%%%%%% Table 4 %%%%%%%%%%%%%%%
\begin{table}[ht]
\caption{Klein four subgroups in $\Aut(\fru_0)$ for exceptional case}
\centering
\begin{tabular}{|c |c |c |c |c}
\hline $\fru_0$ & $\Gamma_{i}$ & $\frl_0=\fru_0^{\Gamma_{i}}$  & Type  \\ [0.5ex]
\hline
$\fre_6$& $\Gamma_{1}=\langle\exp(\pi i H'_2), \exp(\pi i H'_4)\rangle$ &
$(\mathfrak{su}(3))^{2}\oplus (i\bbR)^{2}$ & $(\sigma_1,\sigma_1,\sigma_1)$, V \\
\hline

 $\fre_6$& $\Gamma_{2}=\langle\exp(\pi i H'_4), \exp(\pi i (H'_3+H'_4+H'_5))\rangle$
 & $\mathfrak{su}(4) \oplus(\mathfrak{sp}(1))^{2}\oplus i \bbR$&  $(\sigma_1,\sigma_1,\sigma_2)$, S \\
\hline

$\fre_6$& $\Gamma_{3}=\langle\exp(\pi i (H'_2+H'_1)), \exp(\pi i
(H'_4+H'_1))\rangle$ & $\mathfrak{su}(5) \oplus (i\bbR)^{2}$ &
$(\sigma_1,\sigma_2,\sigma_2)$, S  \\
\hline

$\fre_6$& $\Gamma_{4}=\langle\exp(\pi i (H'_1+H'_6)), \exp(\pi i
(H'_3+H'_5))\rangle$ &  $\mathfrak{so}(8)\oplus (i\bbR)^{2}$  &
$(\sigma_2,\sigma_2,\sigma_2)$, V \\
\hline

$\fre_6$& $\Gamma_{5}=\langle\exp(\pi i H'_2), \tau \rangle$ &
$\mathfrak{sp}(3) \oplus \mathfrak{sp}(1)$  &
$(\sigma_1,\sigma_3,\sigma_4)$, N \\
\hline

$\fre_6$& $\Gamma_{6}=\langle\exp(\pi i H'_2), \tau \exp(\pi i
H'_4)\rangle$ & $\mathfrak{so}(6)\oplus i\bbR$  &
$(\sigma_1,\sigma_4,\sigma_4)$, S \\
\hline

$\fre_6$& $\Gamma_{7}=\langle\exp(\pi i(H'_1+H'_6))), \tau\rangle$  &
$\mathfrak{so}(9)$ & $(\sigma_2,\sigma_3,\sigma_3)$, S \\
\hline

$\fre_6$& $\Gamma_{8}=\langle\exp(\pi i(H'_1+H'_6)), \tau \exp(\pi i
H'_2)\rangle$ &  $\mathfrak{so}(5)\oplus \mathfrak{so}(5)$
& $(\sigma_2,\sigma_4,\sigma_4)$, S \\
\hline

$\fre_7$& $\Gamma_{1}=\langle\exp(\pi i H'_2), \exp(\pi i H'_4)\rangle$ &
$\mathfrak{su}(6)\oplus (i\bbR)^{2}$&$(\sigma_1,\sigma_1,\sigma_1)$, V\\
\hline

$\fre_7$& $\Gamma_{2}=\langle\exp(\pi i H'_2), \exp(\pi i H'_3)\rangle$ &
$\mathfrak{so}(8)\oplus (\mathfrak{sp}(1))^{3}$  &
$(\sigma_1,\sigma_1,\sigma_1)$, V\\
\hline

$\fre_7$& $\Gamma_{3}=\langle\exp(\pi i H'_2), \tau \rangle$ &
$\mathfrak{so}(10) \oplus (i\bbR)^{2}$   &
$(\sigma_1,\sigma_2,\sigma_2)$, S \\
\hline

$\fre_7$& $\Gamma_{4}=\langle\exp(\pi i H'_1), \tau \rangle$  &
$\mathfrak{su}(6)\oplus \mathfrak{sp}(1) \oplus i\bbR$   &
$(\sigma_1,\sigma_2,\sigma_3)$, N  \\
\hline

$\fre_7$& $\Gamma_{5}=\langle\exp(\pi i H'_2), \tau \exp(\pi i
H'_1)\rangle$ & $\mathfrak{su}(4)\oplus \mathfrak{su}(4) \oplus
i\bbR$ & $(\sigma_1,\sigma_3,\sigma_3)$, S \\
\hline

$\fre_7$& $\Gamma_{6}=\langle\tau, \omega\rangle$ & $\frf_4$ &
$(\sigma_2,\sigma_2,\sigma_2)$, V \\
\hline

$\fre_7$& $\Gamma_{7}=\langle\tau, \omega \exp(\pi i H'_1)\rangle$ &
$\mathfrak{sp}(4)$ &  $(\sigma_2,\sigma_3,\sigma_3)$, S \\
\hline

$\fre_7$& $\Gamma_{8}=\langle\tau \exp(\pi i H'_1), \omega \exp(\pi i
H'_3)\rangle$ & $\mathfrak{so}(8)$ &
$(\sigma_3,\sigma_3,\sigma_3)$, V \\
\hline

$\fre_8$& $\Gamma_{1}=\langle\exp(\pi i H'_2), \exp(\pi i H'_4)\rangle$ &
$\fre_6\oplus (i\bbR)^{2}$  &  $(\sigma_1,\sigma_1,\sigma_1)$, V\\
\hline

$\fre_8$& $\Gamma_{2}=\langle\exp(\pi i H'_2), \exp(\pi i H'_1)\rangle$ &
$\mathfrak{so}(12)\oplus(\mathfrak{sp}(1))^{2}$ &$(\sigma_1,\sigma_1,\sigma_2)$, S\\
\hline

$\fre_8$& $\Gamma_{3}=\langle\exp(\pi i H'_2), \exp(\pi i
(H'_1+H'_4))\rangle$
& $\mathfrak{su}(8)\oplus i\bbR$ & $(\sigma_1,\sigma_2,\sigma_2)$, S\\
\hline

$\fre_8$& $\Gamma_{4}=\langle\exp(\pi i (H'_2+H'_1)),\exp(\pi i
(H'_5+H'_1))\rangle$ & $\mathfrak{so}(8)\oplus \mathfrak{so}(8)$ &
$(\sigma_2,\sigma_2,\sigma_2)$, V\\
\hline

$\frf_4$& $\Gamma_{1}=\langle\exp(\pi i H'_2),\exp(\pi i H'_1)\rangle$ &
$\mathfrak{su}(3)\oplus (i\bbR)^{2}$ &  $(\sigma_1,\sigma_1,\sigma_1)$, V\\
\hline

$\frf_4$& $\Gamma_{2}=\langle\exp(\pi i H'_3),\exp(\pi i H'_2)\rangle$ &
$\mathfrak{so}(5)\oplus(\mathfrak{sp}(1))^{2}$ &
$(\sigma_1,\sigma_1,\sigma_2)$, S\\
\hline

$\frf_4$& $\Gamma_{3}=\langle\exp(\pi i H'_4),\exp(\pi i H'_3)\rangle$ &
$\mathfrak{so}(8)$& $(\sigma_2,\sigma_2,\sigma_2)$, V\\
\hline

$\frg_2$& $\Gamma=\langle\exp(\pi i H'_1), \exp(\pi i H'_2)\rangle$ &
$(i\bbR)^{2}$ &  $(\sigma,\sigma,\sigma)$, V \\
\hline

\end{tabular}
\end{table}

\begin{theorem}\label{T}
For a compact simple Lie algebra $\fru_0$, any Klein four subgroup $\Gamma\subset\Aut(\fru_0)$ 
is conjugate to one in Table 3 or Table 4 and they are all regular. 
\end{theorem}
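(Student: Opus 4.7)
The plan is to classify Klein four subgroups $\Gamma\subset\Aut(\fru_0)$ by reducing to the classification of (unordered) pairs of commuting involutions, and then to read off both the conjugacy classification and the regularity property from the case-by-case data already assembled in Sections~4 and~5.1.

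First, I would observe that any Klein four subgroup has the form $\Gamma=\{1,\theta,\sigma,\theta\sigma\}$ and that two such subgroups $\Gamma,\Gamma'$ are conjugate in $\Aut(\fru_0)$ iff there is a conjugation sending the three-element set $\Gamma\setminus\{1\}$ to $\Gamma'\setminus\{1\}$. To enumerate $\Gamma$ up to conjugation, I would fix one involution $\theta$ among the representatives listed in Subsection~\ref{involution representative}, compute $\Aut(\fru_0)^\theta$ from Table~2, and classify involutions $\sigma\in\Aut(\fru_0)^\theta\setminus\{\theta\}$ up to $\Aut(\fru_0)^\theta$-conjugacy. By the discussion in Subsection~\ref{symmetric involutions}, this already enumerates ordered pairs $(\theta,\sigma)$; passing from ordered to unordered pairs amounts to grouping together those $(\theta,\sigma)$ and $(\theta',\sigma')$ whose non-identity elements generate the same $\Gamma$ up to conjugacy. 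To detect coincidences and to verify the lists in Tables~3 and~4 are complete and pairwise non-conjugate, I would use the fixed point subalgebra $\fru_0^\Gamma$, which is a conjugacy invariant and which in each row of the tables is distinct from the others.

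For the classical cases, the symmetric subgroups $\Aut(\fru_0)^\theta$ are (twisted) products of classical groups, so involutions in them can be listed by direct matrix calculation, yielding the generators $\Gamma_{p,q,r,s}$, $\Gamma_p$, $\Gamma_{p,q}$, $\Gamma'_p$, $\Gamma'_{p,q}$ of Table~3, together with their symmetric-space types read off from Section~2.3. For the exceptional cases, I would use the list of involution representatives $\tau_i$ in each $\Aut(\fru_0)^\theta$ worked out explicitly in Subsection~\ref{symmetric involutions}, pair them with $\theta$ to form $\Gamma=\langle\theta,\tau_i\rangle$, and for each such $\Gamma$ identify $\fru_0^\Gamma$ (since the conjugacy class of any involution $\sigma\in\Aut(\fru_0)$ is determined by $\dim\frg^\sigma$ for exceptional $\fru_0$, the tripled invariants $(\dim\frg^\theta,\dim\frg^\sigma,\dim\frg^{\theta\sigma})$ plus $\fru_0^\Gamma$ are enough to separate classes and to match Table~4). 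This step produces the lists and their involution/symmetric-space types.

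For regularity, given two conjugate involutions $\theta,\sigma\in\Gamma$, I would exhibit an explicit $g\in\Aut(\fru_0)^{\theta\sigma}$ with $g\theta g^{-1}=\sigma$. In the classical table this is visible from the symmetry of the defining matrices: the swap $(X,Y)\mapsto(Y,X)$ (implemented by $J_p$ or a permutation matrix) centralizes the third involution; similarly for the $\Gamma_{p,q,r,s}$ families one uses the permutation matrices permuting the blocks of equal size, which automatically commute with the product $\theta\sigma$. In the exceptional cases I would use the element $\omega$ of Section~4 (in type $\E_7$) and its analogues, or the explicit elements like $\delta$ in $\Spin(n)$, observing directly that they lie in the centralizer of the remaining involution. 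The main obstacle, and where the bulk of the work concentrates, is the exceptional list: several of the subgroups in Table~4 are of type $(\sigma_i,\sigma_i,\sigma_i)$ or $(\sigma_i,\sigma_i,\sigma_j)$, and one must exhibit the conjugating element inside the correct centralizer, which requires delving into the explicit structures of $\Spin(12)\times\Sp(1)/\langle(c,1),(-1,-1)\rangle$, $\SU(8)/\langle iI\rangle\rtimes\langle\omega\rangle$, and $(\E_6\times\U(1))/\langle(c,e^{2\pi i/3})\rangle\rtimes\langle\omega\rangle$ from Table~2, and carefully matching the normalization of the $\tau_j$ with the automorphisms listed in Subsection~\ref{symmetric involutions}.
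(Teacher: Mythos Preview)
Your proposal is essentially correct and follows the same overall architecture as the paper: use the symmetric subgroups computed in Section~\ref{symmetric subgroup} together with the classification of involutions in $\Aut(\fru_0)^\theta$ from Subsection~\ref{symmetric involutions} to enumerate commuting pairs, then collapse to Klein fours using $\fru_0^\Gamma$ as the distinguishing invariant.

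The one substantive difference is in how regularity is established in the exceptional cases. You propose to exhibit, for each special Klein four $\Gamma$ and each pair of $\Aut(\fru_0)$-conjugate involutions $\theta,\sigma\in\Gamma$, an explicit element $g\in\Aut(\fru_0)^{\theta\sigma}$ conjugating $\theta$ to $\sigma$; as you note, this is where the work concentrates and requires case-by-case manipulation inside the groups of Table~2. The paper avoids this labor by a counting argument: having already established Proposition~\ref{Berger} (Berger's count of $23,19,8,5,1$ semisimple symmetric pairs for $\E_6,\E_7,\E_8,\F_4,\G_2$), the paper observes that the Klein fours in Table~4 already account for all of these ordered pairs; if some $\Gamma$ were not regular, one could produce two non-conjugate ordered pairs $(\theta_1,\theta_2)$, $(\theta_1',\theta_2')$ generating the same $\Gamma$ with matching $\Aut(\fru_0)$-classes in each slot, yielding strictly more symmetric pairs than Berger's count allows --- contradiction. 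This is slicker and entirely bypasses the explicit construction of conjugating elements. Your direct approach would also work, and has the virtue of being constructive, but the counting trick is a genuine economy worth noting.
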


\begin{proof}
When $\fru_0$ is a classical simple Lie algebra, we can do matrix calculation to show Table 3 
is complete. When $\fru_0$ is an exceptional simple Lie algebra, from the Klein four subgroups, 
we get non-conjugate commuting pairs of involutions $(\theta_1,\theta_2)$ distinguished by 
the isomorphism type of $\fru_0^{\langle\theta_1,\theta_2\rangle}$ or the distribution of the 
classes of (ordered) $\theta_1,\theta_2,\theta_3$.  When $\fru_0$ is of type 
$\bf E_6,\bf E_7,\bf E_8,\bf F_4,\bf G_2$, we get (at least) 23,19,8,5,1 non-conjugate 
commuting pairs respectively. By Proposition \ref{Berger}, they represent all conjugacy classes of 
commuting pairs of involutions. So table 4 is complete. 

For an exceptional simple Lie algebra $\fru_0$, suppose that some Klein four subgroup fails to be regular, 
then we can construct non-conjugate commuting pairs 
$(\theta_1,\theta_2)$ and $(\theta'_1,\theta'_2)$ with $\langle\theta_1,\theta_2\rangle=
\langle\theta'_1,\theta'_2\rangle$, $\theta_1\sim\theta'_1,\theta_2\sim\theta'_2,
\theta_1\theta_2\sim\theta'_1\theta'_2$. Then there should exist more isomorphism classes of 
symmetric pairs, which is not the case. So any Klein four subgroup is regular. 

In general, to show all Klein four subgroups of $\Aut(\fru_0)$ are regular, we just 
need to check for any commuting pair of involutions $\theta_1,\theta_2\in\Aut(\fru_0)$ with 
$\theta_1\sim\theta_2$ (in $\Aut(\fru_0)$), $\theta_1,\theta_2$ are conjugate in $\Aut(\fru_0)^{\theta}$, 
where $\theta=\theta_1\theta_2$. Fix $\theta$ as an representative in Subsection 2.3, When $\fru_0$ is 
an exceptional simple Lie algebra, this is already checked in the last subsection; when 
$\fru_0$ is a classical simple Lie algebra, we can check this from the data in Table 3 
(list of Klein fours with symmetric space type) and Table 2 (symmetric subgroups). 
\end{proof}

An equivalent statement of the second statement in Theorem \ref{T} (any Klein four subgroup is regular) is, 
two commuting pairs of involtutions $(\theta,\sigma)$ and 
$(\theta',\sigma')$ are conjugate in $\Aut(\fru_0)$ if and only if 
\[\theta\sim\theta', \sigma\sim\sigma',\theta\sigma\sim\theta'\sigma'\]
and the Klein four subgroups $\langle\theta,\sigma\rangle, \langle\theta',\sigma'\rangle\sim$ are conjugate. 
This statement imples the second statemnt in Theorem \ref{T} is clear. To derive this statement from 
Theorem \ref{T}, given two pairs $(\theta,\sigma)$ and $(\theta',\sigma')$ with 
$\theta\sim\theta'$, $\sigma\sim\sigma'$, $\theta\sigma\sim\theta'\sigma'$ and 
$\langle\theta,\sigma\rangle\sim\langle\theta',\sigma'\rangle\sim$.  
After replace $(\theta',\sigma')$ by a pair conjugate to it, we may assume 
$\langle\theta,\sigma\rangle=\langle\theta',\sigma'\rangle\sim$, that is, $(\theta,\sigma),(\theta',\sigma')$ 
generate a same Klein four subgroup $F$. Then by Theorem \ref{T} they are conjugate.  Since any Klein 
subgroup of $\Aut(\fru_0)$ is regular, a conjugacy class of Klein four subgroups gives 6, 3, 1 isomorphism types 
of semisimple symmetric pairs when it is non-special, special but not very special, very special respectively.

The fact that Klein four subgroups in $\Aut(\fru_0)$ are all regular is an interesting phenomenon. 
The property regular can be generalized to any closed subgroup of any Lie group, there are vast of examples 
of non-regular subgroups given in \cite{Larsen}.  

%In the above, we use the classification of ordered commuting pairs of involutions in last subsection 
%to classify Klein four subgroups and show they are regular. The list of Klein four subgroups in Tables 
%3 and 4 together with their types (information of type indicate the kind of speciality) and the property 
%regular also gives a classification of ordered commuting pairs of involutions (equivalently, semisimple 
%symmetric pairs) by choosing pairs of involutions from the Klein four subgroups. Note that, by the property 
%regular, 

\smallskip

From Table 4 and 6, we observe the following statements.

\begin{prop}
When $\fru_0$ is an exceptional compact simple Lie algebra, any two classes of involutions 
have commuting representatives; for any Klein four group $\Gamma\subset\Aut(\fru_0)$ the 
centralizer $\Aut(\fru_0)^{\Gamma}$ meets with all connected components of $\Aut(\fru_0)$. 
\end{prop}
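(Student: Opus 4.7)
My plan is to deduce both statements from direct inspection of Tables 4 and 6.

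For the first statement, I proceed case by case in $\fru_0$. The conjugacy classes of involutions in $\Aut(\fru_0)$ are enumerated in Subsection 2.3 (four for $\fre_6$, three for $\fre_7$, two for $\fre_8$ and $\frf_4$, one for $\frg_2$), yielding at most ten unordered pairs of classes (the maximum occurring for $\fre_6$). For each such pair $(\sigma_i,\sigma_j)$, I consult the ``Type'' column of Table 4 to locate a Klein four subgroup whose three listed involution types contain both $\sigma_i$ and $\sigma_j$; the eight Klein fours $\Gamma_1,\dots,\Gamma_8$ for $\fre_6$ cover all ten pairs of $\{\sigma_1,\dots,\sigma_4\}$, and the other exceptional types are easier. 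The two generators of the located Klein four of the prescribed types furnish the desired commuting representatives.

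For the second statement, when $\fru_0\in\{\fre_7,\fre_8,\frf_4,\frg_2\}$ the group $\Aut(\fru_0)$ is connected (Sections 4.2 and 4.3), so the claim is vacuous. Only the case $\fru_0=\fre_6$ is nontrivial: here $\Aut(\fre_6)/\Int(\fre_6)\cong\bbZ/2\bbZ$ is generated by the diagram involution $\tau$, and I must produce an outer element in $\Aut(\fre_6)^{\Gamma_i}$ for each $i=1,\dots,8$. For $\Gamma_5,\Gamma_6,\Gamma_7,\Gamma_8$ one listed generator is already outer and lies trivially in the centralizer. For the inner Klein fours $\Gamma_1,\Gamma_2,\Gamma_4$, inspection shows that each generator $\exp(\pi i H)$ has $H\in\frh$ in the $\tau$-fixed subspace (since $\tau$ swaps $H'_1\leftrightarrow H'_6$ and $H'_3\leftrightarrow H'_5$ while fixing $H'_2,H'_4$), so $\tau$ itself centralizes each such $\Gamma$.

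The delicate case is $\Gamma_3=\langle\exp(\pi i(H'_2+H'_1)),\exp(\pi i(H'_4+H'_1))\rangle$, whose generators involve the non-$\tau$-invariant $H'_1$. Here I invoke Theorem~\ref{T}: the Klein four $\tau\Gamma_3\tau^{-1}$, being of the same type $(\sigma_1,\sigma_2,\sigma_2)$, is $\Aut(\fre_6)$-conjugate to $\Gamma_3$; provided one can strengthen this to $\Int(\fre_6)$-conjugacy via some $g\in\Int(\fre_6)$, the element $g^{-1}\tau$ is an outer automorphism normalizing $\Gamma_3$. Its conjugation action fixes the unique $\sigma_1$-element of $\Gamma_3$; one then adjusts $g$ within $N_{\Int(\fre_6)}(\Gamma_3)$, if necessary, to ensure that it also fixes the two $\sigma_2$-elements, yielding an outer element in $\Aut(\fre_6)^{\Gamma_3}$. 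This final verification for $\Gamma_3$---equivalent to a lattice calculation in the $\E_6$ coweight lattice $P^\vee$ or, alternatively, the extension to $\fre_6$ of the complex-conjugation outer automorphism of the $\mathfrak{su}(5)$-factor in $\fre_6^{\Gamma_3}\cong\mathfrak{su}(5)\oplus(i\bbR)^2$---is the main obstacle; all other cases are settled by routine inspection of Tables 4 and 6.
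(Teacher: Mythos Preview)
Your approach for the first statement, and for seven of the eight $\fre_6$ Klein fours in the second statement, coincides with the paper's: both are read off directly from Tables~4 and~6.  The paper's entire proof is the single sentence ``From Table~4 and~6, we observe the following statements.''

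The one place you diverge is $\Gamma_3$, and here you create an unnecessary difficulty.  You abandon Table~6 and try to manufacture an outer centralizing element from scratch via a conjugacy argument, leaving two unverified steps (the strengthening to $\Int(\fre_6)$-conjugacy, and the adjustment within the normalizer).  But Table~6 already records
\[
\Aut(\fre_6)^{\Gamma_3}=(SU(5)\times U(1)\times U(1))\rtimes\langle\tau'\rangle,\qquad \Ad(\tau')(X,\lambda,\mu)=(\overline{X},\lambda^{-1},\mu^{-1}),
\]
and Table~6 was obtained as $(\Aut(\fre_6)^{\theta})^{\sigma}$ from the Table~2 descriptions of the symmetric subgroups, in which the semidirect factor $\langle\tau\rangle$ explicitly marks the outer component of $\Aut(\fre_6)$.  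Thus $\tau'$ is outer by construction, and $\Gamma_3$ requires no separate argument.  Your lattice/conjugacy detour is not wrong in spirit, but it is incomplete as written and entirely avoidable: simply cite Table~6 uniformly for all eight $\fre_6$ Klein fours, as the paper does.
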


When $\fru_0$ is a classical simple Lie algebra, both statements of the above proposition fail in general. 
For example, in $\Aut(\mathfrak{su}(2n))$ and for an odd $p$ with $1\leq p\leq n-1$, $\tau\circ\Ad(I_{n,n})$ 
($\tau=\textrm{complex conjugation}$) doesn't commute with any involution conjugate to $\Ad(I_{p,2n-p})$; 
in $\Aut(\mathfrak{so}(4n))$, $\Aut(\mathfrak{so}(4n))^{\Gamma_{n}}\subset\Int(\mathfrak{so}(4n))$ 
(see Table 3 for the definition of $\Gamma_{n}$).   

\smallskip

Lastly for each Klein four subgroup $\Gamma$ listed in Tables 3 and  4 with two generators 
$\theta,\sigma \in\Aut(\fru_0)$. Then we get the centralizer $\Aut(\fru_0)^{\Gamma}$ by calculating 
$(\Aut(\fru_0)^{\theta})^{\sigma}$. The results of $\Aut(\fru_0)^\Gamma$ are listed in Table 5 for 
the classical cases and in Tables 6 for the exceptional cases.

\begin{table}[ht]
\caption{Fixed point subgroups of Klein four subgroups: classical cases}
\centering
\begin{tabular}{|c |c |c |}
\hline
$\fru_0$ & $\Gamma_{i}$ & $L=\Aut(\fru_0)^{\Gamma_{i}}$\\ [0.5ex]
\hline

$\mathfrak{su}(p+q), p\neq q$&$\Gamma_{p,q}$& $((O(p)\times
O(q))/\langle(-I_{p},-I_{q})\rangle) \times\langle\tau\rangle$ \\
\hline

$\mathfrak{su}(2p)$&$\Gamma_{p,p}$& $((O(p)\times
O(p))/\langle(-I_{p},-I_{p})\rangle)
\rtimes\langle\tau,J_{p}\rangle$, \\
&&$\Ad(J_{p})(X,Y)=(Y,X)$, $\Ad(\tau)=1$\\
\hline

$\mathfrak{su}(2p)$&$\Gamma'_{p}$& $(U(p)/\langle-I_{p}\rangle)
\rtimes\langle \tau,z\rangle$, $\Ad(z)=1$ \\
\hline

$\mathfrak{su}(2p+2q),p\neq q$&$\Gamma'_{p,q}$& $((Sp(p)\times
Sp(q))/\langle(-I_{p},-I_{q})\rangle)
\times\langle\tau J_{p+q}\rangle$ \\
\hline

$\mathfrak{su}(4p)$&$\Gamma'_{p,p}$& $((Sp(p)\times
Sp(p))/\langle(-I_{p},-I_{p})\rangle)
\rtimes\langle\tau J_{2p},J_{p}\rangle$, \\
&&$\Ad(J_{p})(X,Y)=(Y,X)$, $\Ad(\tau J_{2p})=1$\\
\hline

$\mathfrak{su}(p+q+r+s)$& $\Gamma_{p,q,r,s}$& $((S(U(p)\times
U(q)\times U_{r}\times U_{s})/\langle Z_{p+q+r+s}\rangle)\rtimes\langle\tau\rangle$\\
&&$\Ad(\tau)=\textrm{complex conjugation}$\\
\hline

$\mathfrak{su}(2p+2r)$,$p\neq r$& $\Gamma_{p,p,r,r}$& $((S(U(p)\times
U(p)\times U_{r}\times U_{r})/\langle Z_{2p+2r}\rangle)\rtimes\langle\tau,J_{p,r}\rangle$\\
&&$\Ad(J_{p,r})(X_1,X_2,X_3,X_4)=(X_2,X_1,X_4,X_3)$\\
\hline

$\mathfrak{su}(4p)$& $\Gamma_{p,p,p,p}$& $((S(U(p)\times
U(p)\times U(p)\times U(p))/\langle Z_{4p}\rangle)\rtimes\langle\tau,J_{2p},J_{p,p}\rangle$\\
&&$\Ad(J_{2p})(X_1,X_2,X_3,X_4)=(X_3,X_4,X_1,X_2)$\\
\hline

$\mathfrak{su}(2p)$& $\Gamma_{p}$ & $PSU(p)\rtimes\langle F_{p},\tau\rangle$ \\
&&$\Ad(\tau)=\textrm{complex conjugation}$, $\Ad(F_{p})=1$\\
\hline

$\mathfrak{so}(p+q+r+s)$& $\Gamma_{p,q,r,s}$& $(O(p)\times O(q)\times
O(r)\times O(s))/\langle-I_{p+q+r+s}\rangle$\\ \hline

$\mathfrak{so}(2p+2r)$,$p\neq r$& $\Gamma_{p,p,r,r}$& $((O(p)\times
O(p)\times O(r)\times O(r))/\langle-I_{2p+2r}\rangle))\rtimes
\langle J_{p,r}\rangle$\\
&&$\Ad(J_{p,r})(X_1,X_2,X_3,X_4)=(X_2,X_1,X_4,X_3)$\\
\hline

$\mathfrak{so}(4p)$, $p\neq 2$& $\Gamma_{p,p,p,p}$&
$((O(p))^{4}/\langle -I_{4p}\rangle)\rtimes\langle J_{2p},J_{p,p}\rangle$\\
&&$\Ad(J_{2p})(X_1,X_2,X_3,X_4)=(X_3,X_4,X_1,X_2)$\\
\hline

$\mathfrak{so}(8)$& $\Gamma_{2,2,2,2}$& $(U(1)^{4}/Z')\rtimes\langle
\epsilon_{1,2},\epsilon_{1,3},\epsilon_{1,4},S_{4}\rangle$\\
&&$\Ad(\epsilon_{1,2})(X_1,X_2,X_3,X_4)=(-X_1,-X_{2},X_3,X_4)$,etc\\
&&$S_4$ acts by permutations\\
\hline

$\mathfrak{so}(2p)$& $\Gamma_{p}$ & $(O(p)/\langle-I_{p}\rangle)\times F_{p}$\\
\hline

$\mathfrak{so}(2p+2q), p\neq q$&$\Gamma_{p,q}$& $((U(p)\times
U(q))/\langle(-I_{p},-I_{q})\rangle)\rtimes\langle\tau\rangle$\\
&&$\Ad(\tau)=\textrm{complex conjugation}$\\
\hline

$\mathfrak{so}(4p)$&$\Gamma_{p,p}$& $((U(p)\times
U(p))/\langle(-I_{p},-I_{p})\rangle)
\rtimes\langle\tau,J_{p}\rangle$, \\
&&$\Ad(J_{p})(X,Y)=(Y,X)$\\
\hline

$\mathfrak{so}(4p)$& $\Gamma'_{p}$ & $(Sp(p)/\langle-I_{p}\rangle)\times F'_{p}$\\
\hline

$\mathfrak{sp}(n)$& $\Gamma_{p}$ & $(O(n)/\langle-I_{n}\rangle)\times F_{p}$\\
\hline

$\mathfrak{sp}(p+q), p\neq q$&$\Gamma_{p,q}$& $((U(p)\times
U(q))/\langle(-I_{p},-I_{q})\rangle)\times\langle\tau\rangle$\\
\hline

$\mathfrak{sp}(2p)$&$\Gamma_{p,p}$& $((U(p)\times
U(p))/\langle(-I_{p},-I_{p})\rangle)\rtimes\langle\tau,J_{p}\rangle$, \\
&&$\Ad(\tau)=\textrm{complex conjugation}$, $\Ad(J_{p})(X,Y)=(Y,X)$\\
\hline

$\mathfrak{sp}(2p)$& $\Gamma'_{p}$ & $(Sp(p)/\langle-I_{p}\rangle)\times F'_{p}$\\
\hline

$\mathfrak{sp}(p+q+r+s)$& $\Gamma_{p,q,r,s}$& $(Sp(p)\times Sp(q)\times
Sp(r)\times Sp(s))/\langle-I_{p+q+r+s}\rangle$\\ \hline

$\mathfrak{sp}(2p+2r)$,$p\neq r$& $\Gamma_{p,p,r,r}$& $((Sp(p)\times
Sp(p)\times Sp(r)\times Sp(r))/\langle-I_{2p+2r}\rangle)\rtimes
\langle J_{p,r}\rangle$\\
&&$\Ad(J_{p,r})(X_1,X_2,X_3,X_4)=(X_2,X_1,X_4,X_3)$\\
\hline

$\mathfrak{sp}(4p)$& $\Gamma_{p,p,p,p}$& $((Sp(p))^{4}/\langle -I_{4p}\rangle)\rtimes\langle J_{2p},J_{p,p}\rangle$\\
&&$\Ad(J_{2p})(X_1,X_2,X_3,X_4)=(X_3,X_4,X_1,X_2)$\\
\hline
\end{tabular}
\end{table}

%%%%%%%%%%%%%%%%%%%%%%%%%%%%%%%%%%%%%%%%%%%%%%%%%%%%%%%%%%%%%%%%%%%%%%

%%%%%%%%%%%%%%% Table 6 %%%%%%%%%%%%%%%%%%%%%%%%%%%%%%%%%%%%%%%%%%%%%

\begin{table}[ht]
\caption{Fixed point subgroups of Klein four subgroups: exceptional cases}
\centering
\begin{tabular}{|c |c |c |c }
\hline $\fru_0$ & $\Gamma_{i}$ & $L=\Aut(\fru_0)^{\Gamma_{i}}$\\ [0.5ex]
\hline

$\fre_6$& $\Gamma_{1}$ & $((SU(3)\times SU(3)\times U(1)\times
U(1))/\langle(e^{\frac{2\pi i}{3}} I,I,e^{\frac{2\pi
i}{3}},1),(I,e^{\frac{2\pi i}{3}} I,e^{\frac{-2\pi
i}{3}},1)\rangle)\rtimes \langle z,\tau\rangle$, \\
&& $\Ad(\tau)(X,Y,\lambda,\mu)=(\overline{Y},\overline{X},\lambda,\mu)$,
$\Ad(z)(X,Y,\lambda,\mu)=(Y,X,\lambda^{-1},\mu^{-1})$\\ \hline

$\fre_6$&$\Gamma_{2}$& $(SU(4)\times Sp(1)\times Sp(1)\times U(1))/
\langle(iI,-1,1,i),(I,-1,-1,-1)\rangle)\rtimes \langle\tau\rangle$, \\
&&$\Ad(\tau)(X,y,z,\lambda)=(J_{2}\overline{X}(J_{2})^{-1},y,z,\lambda^{-1})$
\\ \hline

$\fre_6$& $\Gamma_{3}$ & $(SU(5)\times U(1)\times U(1))\rtimes \langle
\tau' \rangle$, \\&&$\Ad(\tau')(X,\lambda,\mu)=(\overline{X},\lambda^{-1},\mu^{-1})$  \\
\hline

$\fre_6$& $\Gamma_{4}$ & $((\Spin(8)\times U(1)\times
U(1))/\langle(-1,-1,1),(c,1,-1)\rangle)\rtimes\langle\tau\rangle$,
\\ & &$\Ad(\tau)(x,\lambda,\mu)=(x,\lambda^{-1},\mu^{-1})$\\
\hline

$\fre_6$& $\Gamma_{5}$ & $((Sp(3)\times
Sp(1))/\langle(-I,-1)\rangle)\times \langle \tau\rangle$
\\ \hline

$\fre_6$& $\Gamma_{6}$ & $((SO(6)\times
U(1))/\langle(-I,-1)\rangle)\rtimes\langle\tau',z\rangle$,\\
&&$\Ad(z)(X,\lambda)=(I_{3,3}X I_{3,3},\lambda^{-1})$, $\Ad(\tau')=1$
\\ \hline

$\fre_6$& $\Gamma_{7}$&$\Spin(9)\times \langle\tau\rangle$ \\
\hline

$\fre_6$& $\Gamma_{8}$ &  $((\Spin(5)\times\Spin(5))/\langle(-1,-1)
\rangle) \rtimes\langle\tau',z\rangle$,\\&& $\Ad(z)(x,y)=(y,x)$\\
\hline

$\fre_7$& $\Gamma_{1}$ & $((SU(6)\times U(1)\times
U(1))/\langle(e^{\frac{2\pi i}{3}}I, e^{\frac{-2\pi i}{3}}, 1),
(-I,1,1)\rangle)\rtimes\langle z\rangle$,\\ &
&$\Ad(z)(X,\lambda,\mu)=(J_{3}\overline{X}
J_{3}^{-1},\lambda^{-1},\mu^{-1})$ \\
\hline

$\fre_7$& $\Gamma_{2}$ & $(\Spin(8)\times
Sp(1)^{3})/\langle(c,-1,1,1),(1,-1,-1,-1),(-1,-1,-1,1)\rangle$\\
\hline

$\fre_7$& $\Gamma_{3}$ & $((\Spin(10)\times U(1)\times U(1))/\langle
(c,i,1)\rangle)\rtimes\langle z\rangle$,\\&& $\Ad(z)(x,\lambda,\mu)=(e_1
x e_{1}^{-1},\lambda^{-1},\mu^{-1})$ \\ \hline

$\fre_7$& $\Gamma_{4}$  & $((SU(6)\times Sp(1)\times
U(1))/\langle(e^{\frac{2\pi i}{3}}I,1,e^{\frac{-2\pi i}{3}}),
(-I,-1,1)\rangle)\rtimes\langle z\rangle$,
\\&&$\Ad(z)(X,y,\lambda)=(J_{3}\overline{X}
J_{3}^{-1},y,\lambda^{-1})$\\
\hline

$\fre_7$& $\Gamma_{5}$ & $((\Spin(6)\times\Spin(6)\times U(1))/
\langle(c,c',1),(1,-1,-1)\rangle)\rtimes\langle z_1,z_2\rangle$,\\
& &$\Ad(z_1)(x,y,\lambda)=(y,x,\lambda^{-1})$,
$\Ad(z_2)(x,y,\lambda)=(e_1 x e_{1}^{-1}, e_{1} y e_{1}^{-1},\lambda^{-1})$\\
\hline

$\fre_7$& $\Gamma_{6}$ & $F_4\times \langle \tau,\omega\rangle$\\
\hline

$\fre_7$& $\Gamma_{7}$ & $(Sp(4)/\langle -I\rangle)\times\langle\tau,\omega'\rangle$\\
\hline

$\fre_7$& $\Gamma_{8}$ & $(SO(8)/\langle -I\rangle)\times\langle\tau',\omega'\rangle$\\
\hline

$\fre_8$& $\Gamma_{1}$ & $((E_6\times U(1)\times
U(1))/\langle(c,e^{\frac{2\pi i}{3}},1)\rangle)
\rtimes\langle z\rangle$,\\&& $\frl_0^{z}=\frf_4\oplus 0\oplus 0$\\
\hline

$\fre_8$& $\Gamma_{2}$ &
$(\Spin(12)\times Sp(1)\times Sp(1))/\langle(c,-1,1),(-1,-1,-1)\rangle$\\
\hline

$\fre_8$& $\Gamma_{3}$ & $((SU(8)\times
U(1))/\langle(-I,1),(iI,-1)\rangle)\rtimes \langle z\rangle$,\\&&
$\frl_0^{z}=\mathfrak{sp}(4)\oplus 0$\\
\hline

$\fre_8$& $\Gamma_{4}$ & $((\Spin(8)\times
\Spin(8))/\langle(-1,-1),(c,c)\rangle)\rtimes \langle z\rangle$, \\&&
$\Ad(z)(x,y)=(y,x)$\\
\hline

$\frf_4$& $\Gamma_{1}$ & $((SU(3)\times U(1)\times
U(1))/\langle(e^{\frac{2\pi i}{3}}I,e^{\frac{-2\pi
i}{3}},1)\rangle)\rtimes\langle z\rangle$,\\&&
$\frl_0^{z}=\mathfrak{so}(3)\oplus 0\oplus 0$\\
\hline

$\frf_4$& $\Gamma_{2}$ & $((Sp(2)\times Sp(1)\times
Sp(1))/\langle(-I,-1,-1)\rangle$\\
\hline

$\frf_4$& $\Gamma_{3}$ & $\Spin(8)$\\
\hline

$\frg_2$& $\Gamma$ &
$(U(1)\times U(1))\rtimes\langle z\rangle$,\\&& $\Ad(z)(\lambda,\mu)=(\lambda^{-1},\mu^{-1})$\\
\hline
\end{tabular}
\end{table}
%%%%%%%%%%%%%%%%%%%%%%%%%%%%%%%%%%%%

\end{document}